\DeclarePairedDelimiter\abs{\lvert}{\rvert}
\newtheorem{theorem}{Theorem}[section]
\newtheorem{corollary}[theorem]{Corollary}
\newtheorem{lemma}[theorem]{Lemma}
\newtheorem{remark}[theorem]{Remark}
\newtheorem{defi}[theorem]{Definition} 
\newtheorem{question}[theorem]{Question}
\begin{document}

\baselineskip 17pt 

\title[Genus one knot polynomials]{Polynomials of genus one prime knots \\ of complexity at most five}

\author{Maxim Ivanov}
\address{Laboratory of Topology and Dynamics, Novosibirsk State University, Novosibirsk}   \email{m.ivanov2@g.nsu.ru}

\author{Andrei Vesnin}
\address{Laboratory of Topology and Dynamics, Novosibirsk State University, Novosibirsk; Sobolev Institute of Mathematics of SB RAS, Novosibirsk; Tomsk State University, Tomsk}
\email{vesnin@math.nsc.ru} 

\keywords{Virtual knot, knot in a thickened torus, affine index polynomial}

\subjclass[2010]{57M27}

\begin{abstract}
Prime knots of genus one admitting diagram with at most five classical crossings were classified by Akimova and Matveev in 2014. In 2018 Kaur, Prabhakar and Vesnin introduced families of $L$-polynomials and $F$-polynomials for virtual knots which are generalizations of affine index polynomial. Here we introduce a notion of totally flat-trivial knots and demonstrate that for such knots $F$-polynomials and $L$-polynomials coincide with affine index polynomial. We prove that all Akimova~-- Matveev knots are totally flat-trivial and calculate their affine index polynomials.
\end{abstract}

\thanks{This work was supported by the Laboratory of Topology and Dynamics, Novosibirsk State University (contract no. 14.Y26.31.0025 with the Ministry of Science and Education of the Russian Federation).}

\maketitle 

\section*{Introduction}
Tabulating of virtual knots and constructing their invariants is one of the key problems in mordern low-dimensional topology. Table of virtual knots with diagrams, having at most four classical crossings may be found in monography ~\cite{dye} and online ~\cite{Green}. Due to equivalence of virtual knots and knots in thickened surfaces, it's interesting to consider tabulation of knots in 3-manifolds, which are thickenings of surfaces of certain genus. Up to now, there are just few results in this direction. Here we consider prime knots of genus one, admitting diagrams with small number of classical crossings, tabulated by Akimova and Matveev in ~\cite{akimova}. 

We are intrested in behaviour of several polynomial invariants on Akimova~-- Matveev knots. Recall that Kaufman in ~\cite{kauffman2013affine} defined an afiine index polynomial which is an invariant of a virtual knot and possess some important proprties~\cite{kauffman2018cobordism}. In ~\cite{KPV} a generalization of affine index polynomials was introduced, namely a family of $L$-polynomials $\{ L^n_K(t,\ell)\}_{n=1}^{\infty}$ and family of $F$-polynomials $\{ F^n_K(t,\ell)\}_{n=1}^{\infty}$. In ~\cite{IV} authors, using their software, calculated $F$-polynomials of knots tabulated in \cite{dye}  and ~\cite{Green}. Here we consider polynomial invariants for knots in a thickened torus.

The paper has the following structure: in Section~\ref{sec1} we recall some basic definitions and facts to use further, in Section~\ref{sec2}  we introduce totally flat-trivial knots and show that for these knots $L$-polynomials and $F$-polynomials coincide with affine index polynomial, in Section~\ref{sec3} we calculate these invariants for Akimova -- Matveev knots. In Theorem~\ref{theorem3-1} we show that Akimova~-- Matveev knots are totally flat-trivial. In Corollary~\ref{corollary-3.2} and Table~\ref{table-2} their affine index polynomials are given. The investigation of properties of Akimova~-- Matveev knots leads to the following Question~\ref{quest}: Is it true, that every virtual knot of genus one is totally flat-trivial?

\section{Basic definitions} \label{sec1} 

Virtual knots and links were introduced by Louis Kaufman in ~\cite{kauffman1999virtual} as an essential generalization of classical knots. Diagrams of virtual knots may have classical and virtual crossings both. Two virtual knots are \emph{equivalent} if and only if their diagrams could be transformed in each other by finite sequences of classical (RI, RII, RIII in Fig.~\ref{fig1a}) and virtual (VRI, VRII, VRIII and SV in Fig.~\ref{fig1b}) Reidemeister moves.

\medskip 

\begin{figure}[ht] 
\centering
\includegraphics[scale=0.44]{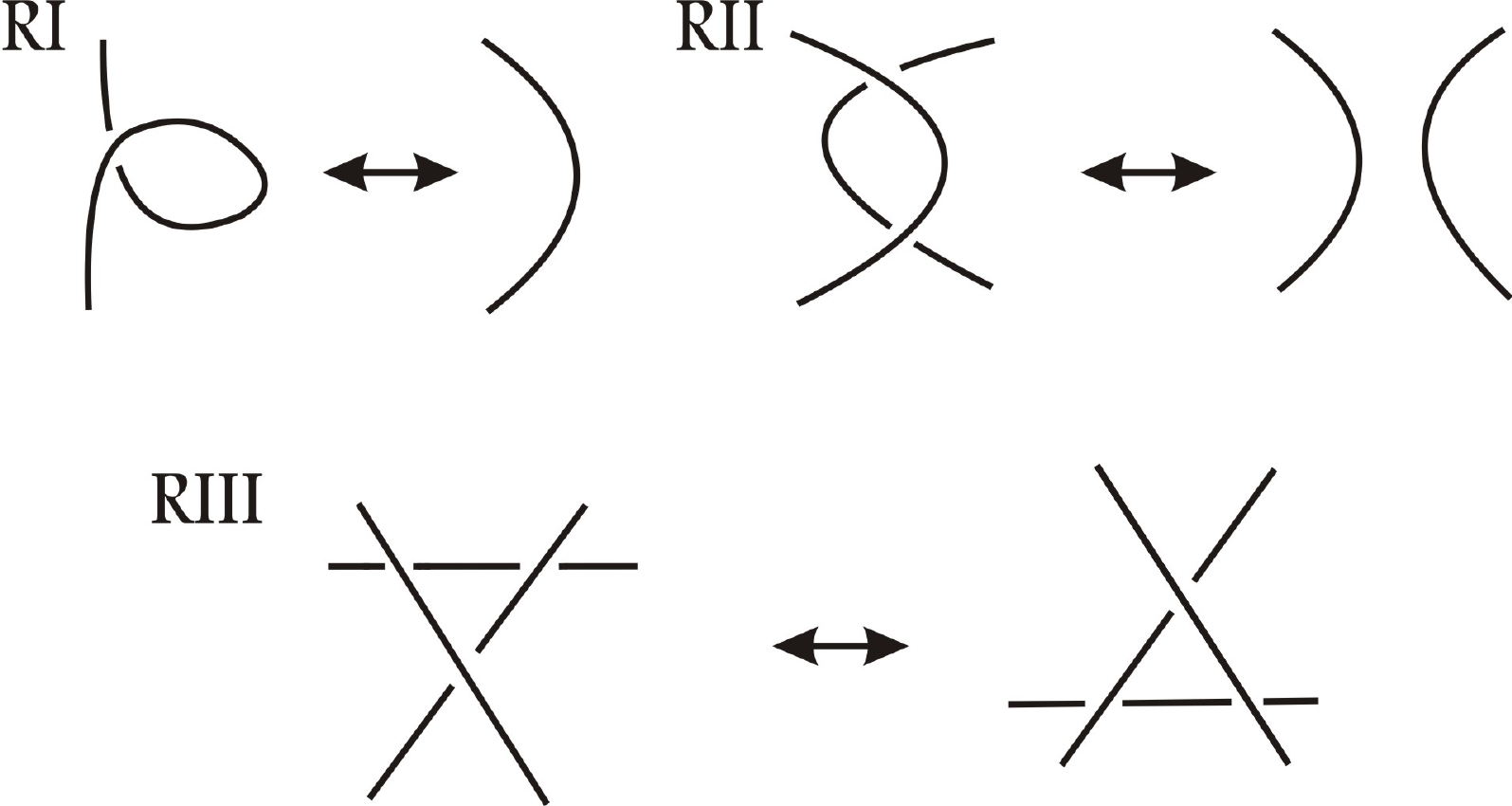} 
\caption{Classical Reidemeister moves.} \label{fig1a}
\end{figure} 

\begin{figure} 
\centering 
\includegraphics[scale=.44]{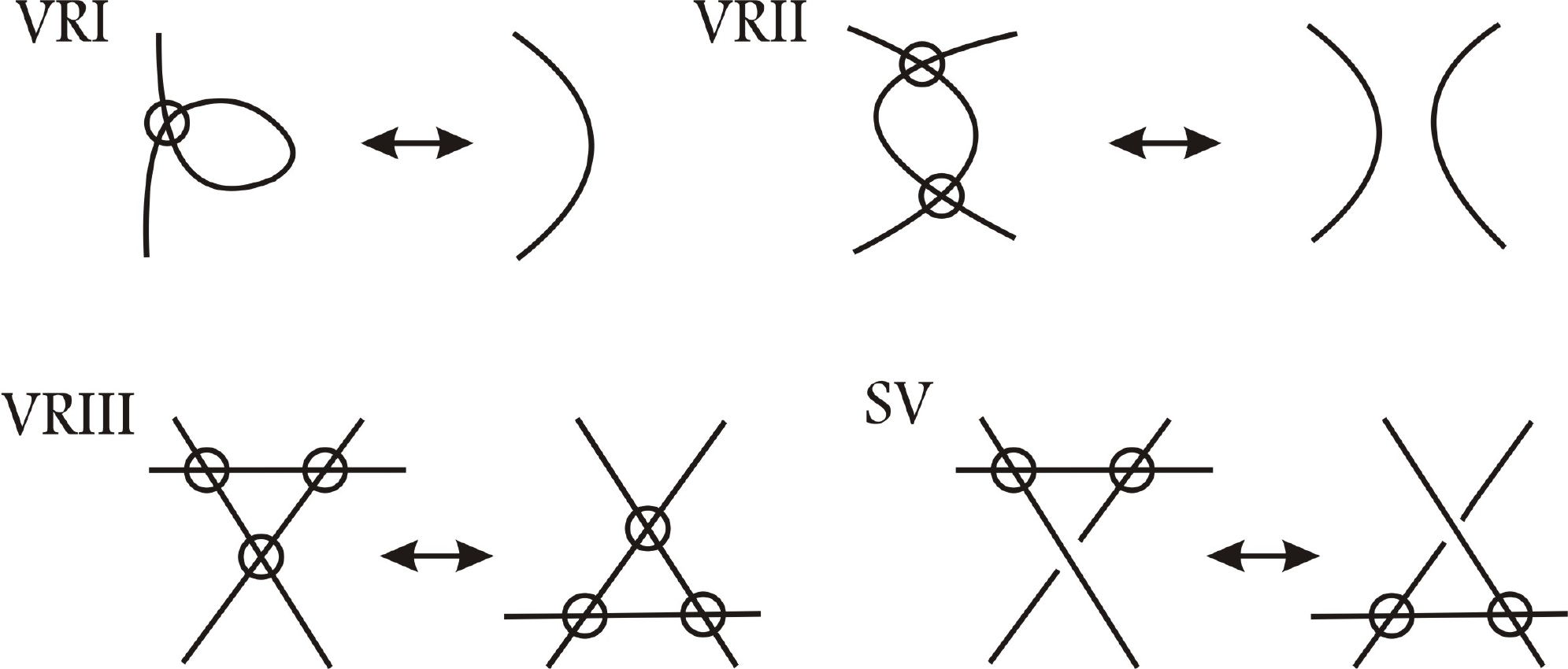}
\caption{Virtual Reidemeister moves.}   \label{fig1b}
\end{figure} 

Diagram, obtained by forgetting over/under crossing information is said to be  \emph{flat knot diagram}. Equivalence of flat knots is defined by \emph{flat Reidemeister moves}, which are different from \emph{virtual Reidemeister moves} in having flat crossings instead of classical ones.

Let $D$ be a diagram of an oriented virtual knot. We denote the set of all classical crossings of diagram $D$ as $C(D)$. Sign of a classical crossing, denoted by $\operatorname{sgn}(c)$ is defined as shown in the Fig.~\ref{fig102}.

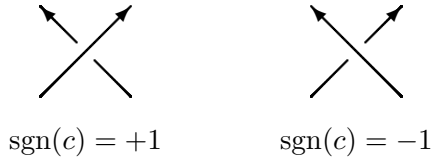
\begin{figure}[!ht]
\centering 
\unitlength=0.6mm
\begin{picture}(0,35)
\thicklines
\qbezier(-40,10)(-40,10)(-20,30)
\qbezier(-40,30)(-40,30)(-32,22) 
\qbezier(-20,10)(-20,10)(-28,18)
\put(-35,25){\vector(-1,1){5}}
\put(-25,25){\vector(1,1){5}}
\put(-30,0){\makebox(0,0)[cc]{$\operatorname{sgn}(c)=+1$}}
\qbezier(40,10)(40,10)(20,30)
\qbezier(40,30)(40,30)(32,22) 
\qbezier(20,10)(20,10)(28,18)
\put(25,25){\vector(-1,1){5}}
\put(35,25){\vector(1,1){5}}
\put(30,0){\makebox(0,0)[cc]{$\operatorname{sgn}(c)=-1$}}
\end{picture}
\caption{Signs of classical crossings.} \label{fig102}
\end{figure}

For every arc in a diagram of virtual knot we assign an integer value in such way that relations presented in a picture\ref{fig103} hold. In \cite{kauffman2013affine} Kaufman proved, that such coloring of an oriented virtual knot diagram, called \emph{Cheng coloring}, always exists. Indeed, for every arc $\alpha$ of a diagram $D$ one can assign value $\lambda (\alpha) = \sum_{c \in O(\alpha)} \operatorname{sgn} (c)$, where $O(\alpha)$ is the  set of classical crossings, which are fist met as overcrossings, when moving around the knot from $\alpha$ with respect to the orientation.

\begin{figure}[!ht]
\centering 
\unitlength=0.6mm
\begin{picture}(0,35)(0,5)
\thicklines
\qbezier(-70,10)(-70,10)(-50,30)
\qbezier(-70,30)(-70,30)(-62,22) 
\qbezier(-50,10)(-50,10)(-58,18)
\put(-65,25){\vector(-1,1){5}}
\put(-55,25){\vector(1,1){5}}
\put(-75,34){\makebox(0,0)[cc]{$b+1$}}
\put(-75,8){\makebox(0,0)[cc]{$a$}}
\put(-45,8){\makebox(0,0)[cc]{$b$}}
\put(-45,34){\makebox(0,0)[cc]{$a-1$}}
\qbezier(10,10)(10,10)(-10,30)
\qbezier(10,30)(10,30)(2,22) 
\qbezier(-10,10)(-10,10)(-2,18)
\put(-5,25){\vector(-1,1){5}}
\put(5,25){\vector(1,1){5}}
\put(-15,34){\makebox(0,0)[cc]{$b+1$}}
\put(-15,8){\makebox(0,0)[cc]{$a$}}
\put(15,8){\makebox(0,0)[cc]{$b$}}
\put(15,34){\makebox(0,0)[cc]{$a-1$}}
\qbezier(70,10)(70,10)(50,30)
\qbezier(70,30)(70,30)(50,10) 
\put(55,25){\vector(-1,1){5}}
\put(65,25){\vector(1,1){5}}
\put(60,20){\circle{4}}
\put(45,34){\makebox(0,0)[cc]{$b$}}
\put(45,8){\makebox(0,0)[cc]{$a$}}
\put(75,8){\makebox(0,0)[cc]{$b$}}
\put(75,34){\makebox(0,0)[cc]{$a$}}
\end{picture}
\caption{Cheng coloring} \label{fig103}
\end{figure}
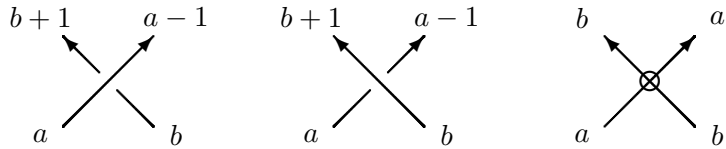
In \cite{cheng2013polynomial} Cheng and Gao put every classical crossing in correspondence with an integer value $\operatorname{Ind}(c)$, defined as
\begin{equation}
\operatorname{Ind}(c) = \operatorname{sgn} (c)(a-b-1)  \label{eq2.2}
\end{equation}
where $a$ and $b$  given by Cheng coloring. One can notice, that Cheng coloring does not depend on types of classical crossings and hence it is defined for an oriented flat knot diagram.
Let us remember that \emph{affine index polynomial} from~\cite{kauffman2013affine} can be written in the following form:
\begin{equation}
P_{D}(t) = \sum _{c\in C(D)} \operatorname{sgn}(c) (t^{\operatorname{Ind}(c)}-1),  \label{eq2}
\end{equation}
where $C(D)$ is a set of all classical crossings of $D$. 

In \cite{satoh2014writhes} Satoh and Taniguchi introduced a notion of $n$-writhe $J_n(D)$. For every $n \in \mathbb{Z} \setminus \{0\}$ define \emph{$n$-writhe} of oriented virtual knot diagram as a difference between number of positive crossings and negative crossings of index $n$.
Notice that $J_n (D)$ is a coefficient of $t^n$ in affine index polynomial and it is an invariant of oriented virtual knot. For more information about $n$-writhe see~\cite{satoh2014writhes}. Using $n$-writhe in \cite{KPV} was defined another invariant -- \emph{$n$-dwrithe} $\nabla J_{n}(D)$: 
$$ 
\nabla J_{n}(D)=J_{n}(D)-J_{-n}(D).
$$

\begin{remark} \label{rem2.1}
{\rm 
$\nabla J_{n}(D)$ is an invariant of oriented virtual knot, since $J_n(D)$ is an invariant of oriented virtual knot. Moreover, $\nabla J_n(D) = 0$ for every classical knot. 
}
\end{remark} 

As it shown in~\cite{KPV}, $\nabla J_n (D)$ represents a flat knot structure. Namely, the following lemma holds

\begin{lemma} \cite[Lemma~2.4]{KPV} \label{lemma1} 
For every $n \in \mathbb{N}$, $n$-dwrithe $\nabla J_n (D)$ is an oriented flat knot invariant. 
\end{lemma}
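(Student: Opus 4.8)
The plan is to build on the fact, already recorded in Remark~\ref{rem2.1}, that $\nabla J_n$ is an oriented \emph{virtual} knot invariant, and to upgrade it to a flat invariant by verifying one additional move. I will use the standard principle that a function on oriented virtual knot diagrams which is invariant under all classical and virtual Reidemeister moves descends to an invariant of oriented flat knots as soon as it is also invariant under crossing changes. The reason is that every flat Reidemeister move is the shadow of a classical or virtual Reidemeister move, so any sequence of flat moves taking $\mathrm{flat}(D)$ to $\mathrm{flat}(D')$ can be lifted to a sequence of honest moves; the only discrepancy is that a lift of $\mathrm{flat}(D)$ need not be $D$ itself but only a diagram with the same underlying flat diagram, hence one obtained from $D$ by crossing changes (and similarly at the $D'$ end). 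Thus it suffices to show that $\nabla J_n$ is unchanged when a single classical crossing $c_0$ of $D$ is switched.

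The key observation is that switching $c_0$ does not alter the underlying flat knot diagram at all. Since the Cheng coloring is defined on the oriented flat diagram (as recalled above), the labels $a,b$ entering formula~\eqref{eq2.2} at every crossing are therefore the same for $D$ and for the switched diagram $D'$. Moreover $\operatorname{sgn}(c)$ is unchanged for every $c\neq c_0$, while $\operatorname{sgn}(c_0)$ becomes $-\operatorname{sgn}(c_0)$; consequently $\operatorname{Ind}(c)$ is unchanged for $c\neq c_0$, and $\operatorname{Ind}(c_0)=\operatorname{sgn}(c_0)(a-b-1)$ becomes $-\operatorname{Ind}(c_0)$.

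Write $\varepsilon=\operatorname{sgn}(c_0)$ and $m=\operatorname{Ind}(c_0)$ for the original diagram. By~\eqref{eq2}, the affine index polynomials of $D$ and $D'$ differ only in the summand coming from $c_0$, so $P_{D'}(t)-P_{D}(t)=-\varepsilon(t^{-m}-1)-\varepsilon(t^{m}-1)=-\varepsilon\bigl(t^{m}+t^{-m}-2\bigr)$, which is palindromic, i.e.\ invariant under $t\mapsto t^{-1}$. Since $J_k(D)$ is the coefficient of $t^{k}$ in $P_{D}(t)$, the coefficients of $t^{n}$ and of $t^{-n}$ in $P_{D'}-P_{D}$ coincide, whence $J_n(D')-J_n(D)=J_{-n}(D')-J_{-n}(D)$ and so $\nabla J_n(D')=J_n(D')-J_{-n}(D')=J_n(D)-J_{-n}(D)=\nabla J_n(D)$. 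Combined with virtual knot invariance, this proves the lemma.

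The step I expect to take the most care in a full write-up is the reduction of flat invariance to virtual invariance plus crossing-change invariance: one should treat the flat Reidemeister moves case by case, noting that a flat RI move lifts for either choice of sign of the new crossing, and that a flat RII or RIII move lifts once the appropriate over/under pattern is assigned to the strands involved, so that the only ambiguity in lifting is indeed a crossing change. After that, the argument is essentially immediate from the stated fact that the Cheng coloring ignores over/under information. A more computational alternative would be to express $\nabla J_n$ directly on signed flat Gauss diagrams and check invariance under the flat Reidemeister moves there, but the route above is shorter.
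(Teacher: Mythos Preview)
The paper does not supply its own proof of this lemma; it is quoted verbatim from \cite{KPV} without argument, so there is nothing in the present paper to compare your proposal against. That said, your proof is correct and is essentially the standard one: since $\nabla J_n$ is already an oriented virtual knot invariant (Remark~\ref{rem2.1}), flat invariance reduces to invariance under a single crossing change, and your computation that switching a crossing $c_0$ alters $P_D(t)$ by the palindromic quantity $-\varepsilon(t^{m}+t^{-m}-2)$ immediately gives $J_n(D')-J_{-n}(D')=J_n(D)-J_{-n}(D)$. The reduction principle you invoke---virtual invariance plus crossing-change invariance implies flat invariance, because every flat Reidemeister move lifts to a genuine Reidemeister move on some choice of over/under data---is standard and your sketch of it is adequate.
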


\smallskip 

Let $\bar{D}$ be a diagram, obtained from $D$ by reversing an orientation and $D^*$ is obtained by switching all classical crossings.    

\smallskip 

\begin{lemma} \cite[Lemma~2.5]{KPV} \label{lemma2}
Let $D$ be a diagram of oriented virtual knot, then $\nabla J_{n}(D^*)=\nabla J_{n}(D)$ and $\nabla J_{n}(\bar{D})=-\nabla J_{n}(D)$. 
\end{lemma}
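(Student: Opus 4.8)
The plan is to track how the sign $\operatorname{sgn}(c)$ and the index $\operatorname{Ind}(c)$ of each classical crossing $c$ transform under the two operations, and then read off the effect on the $n$-writhes $J_n$, hence on $\nabla J_n=J_n-J_{-n}$. For the mirror $D^{*}$ there is in fact a shortcut: switching all classical crossings changes neither the underlying projection, nor its orientation, nor the virtual crossings, so $D^{*}$ and $D$ have literally the same flat diagram; by Lemma~\ref{lemma1} this gives $\nabla J_{n}(D^{*})=\nabla J_{n}(D)$ with no computation at all. (One may also derive it from the crossing-wise analysis below, as a consistency check.) Reversing the orientation genuinely changes the flat knot, so for $\bar D$ the computation is unavoidable.

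The core step is to express the Cheng coloring of $D^{*}$ and of $\bar D$ in terms of that of $D$. First I would rewrite the relations of Fig.~\ref{fig103} in a form that is free of orientation and crossing type: at every classical crossing $c$ one has $\lambda(\text{over-in})-\lambda(\text{over-out})=\operatorname{sgn}(c)$ and $\lambda(\text{under-out})-\lambda(\text{under-in})=\operatorname{sgn}(c)$, and consequently $\operatorname{Ind}(c)=\lambda(\text{over-in})-\lambda(\text{under-out})=\lambda(\text{over-out})-\lambda(\text{under-in})$; this is a direct check of the two local pictures in Fig.~\ref{fig103}. Since a Cheng coloring exists, on a (connected) knot diagram it is unique up to adding a global constant. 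Now $D^{*}$ has $\operatorname{sgn}_{D^{*}}(c)=-\operatorname{sgn}_{D}(c)$ and interchanges the over- and under-strand at each crossing while keeping orientations, so the defining relations for $\lambda_{D^{*}}$ coincide with those for $\lambda_{D}$, whence $\lambda_{D^{*}}=\lambda_{D}+\mathrm{const}$; and $\bar D$ has $\operatorname{sgn}_{\bar D}(c)=\operatorname{sgn}_{D}(c)$ and interchanges ``in'' and ``out'' at each crossing while keeping over/under, so the relations for $\lambda_{\bar D}$ coincide with those for $-\lambda_{D}$, whence $\lambda_{\bar D}=-\lambda_{D}+\mathrm{const}$. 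Substituting these into the formula for $\operatorname{Ind}$, the constants cancel and in both cases one obtains $\operatorname{Ind}_{D^{*}}(c)=-\operatorname{Ind}_{D}(c)$ and $\operatorname{Ind}_{\bar D}(c)=-\operatorname{Ind}_{D}(c)$.

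Finally I would assemble the pieces. From $\operatorname{Ind}_{D^{*}}(c)=-\operatorname{Ind}_{D}(c)$ together with $\operatorname{sgn}_{D^{*}}(c)=-\operatorname{sgn}_{D}(c)$, grouping crossings by index gives $J_{n}(D^{*})=-J_{-n}(D)$ (equivalently $P_{D^{*}}(t)=-P_{D}(t^{-1})$), hence $\nabla J_{n}(D^{*})=J_{n}(D^{*})-J_{-n}(D^{*})=-J_{-n}(D)+J_{n}(D)=\nabla J_{n}(D)$. Likewise $\operatorname{Ind}_{\bar D}(c)=-\operatorname{Ind}_{D}(c)$ with $\operatorname{sgn}_{\bar D}(c)=\operatorname{sgn}_{D}(c)$ gives $J_{n}(\bar D)=J_{-n}(D)$ (equivalently $P_{\bar D}(t)=P_{D}(t^{-1})$), hence $\nabla J_{n}(\bar D)=J_{-n}(D)-J_{n}(D)=-\nabla J_{n}(D)$. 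I expect the only delicate point to be the bookkeeping in the middle paragraph: correctly identifying, for each of the two operations, which of ``over/under'', ``in/out'' and the sign is swapped in the local picture, and verifying that the resulting linear system is exactly the one claimed. Everything after that is formal.
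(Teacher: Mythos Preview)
Your argument is correct. Note that the paper itself does not prove this lemma at all: it merely quotes it from \cite[Lemma~2.5]{KPV}, so there is no ``paper's own proof'' to compare against. Your treatment is a self-contained and clean derivation.

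Two remarks worth recording. First, your shortcut for $D^{*}$ via Lemma~\ref{lemma1} is the most economical route: since $D$ and $D^{*}$ have identical flat projections, flat-invariance of $\nabla J_{n}$ settles that case instantly, and the crossing-wise computation you also sketch serves only as a redundancy check. Second, your reformulation of the Cheng-coloring relations in the over/under--in/out language (independent of orientation and crossing type) is exactly the device that makes the bookkeeping transparent; it is a standard trick in this circle of ideas and your identification of which data each operation swaps is accurate ($D^{*}$: sign flips, over/under swap; $\bar D$: sign fixed, in/out swap). The resulting identities $\operatorname{Ind}_{D^{*}}(c)=\operatorname{Ind}_{\bar D}(c)=-\operatorname{Ind}_{D}(c)$ and the ensuing relations $J_{n}(D^{*})=-J_{-n}(D)$, $J_{n}(\bar D)=J_{-n}(D)$ also recover the affine-index identities $P_{K^{*}}(t)=-P_{K}(t)$ and $P_{\bar K}(t)=P_{K}(t^{-1})$ quoted later in the paper, so your computation is consistent with what the authors use downstream.
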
 

\smallskip 
Consider a smoothing according to the rule, shown in picture~\ref{fig105}. We will call this kind of smoothing by \emph{smoothing against orientation}.  Orientation of $D_{c}$ is induced by smoothing. Since $D$ is a diagram of virtual knot, so $D_{c}$ is a diagram of virtual knot too.  
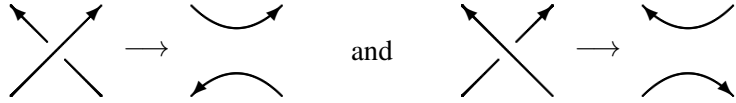
\begin{figure}[!ht]
\centering 
\unitlength=0.6mm
\begin{picture}(0,25)(0,10)
\thicklines
\qbezier(-80,10)(-80,10)(-60,30)
\qbezier(-80,30)(-80,30)(-72,22) 
\qbezier(-60,10)(-60,10)(-68,18)
\put(-75,25){\vector(-1,1){5}}
\put(-65,25){\vector(1,1){5}}
\put(-50,20){\makebox(0,0)[cc]{$\longrightarrow$}}
\qbezier(-40,30)(-30,20)(-20,30)
\qbezier(-40,10)(-30,20)(-20,10) 
\put(-35,15){\vector(-1,-1){5}}
\put(-25,25){\vector(1,1){5}}
\put(0,20){\makebox(0,0)[cc]{and}}
\qbezier(20,30)(20,30)(40,10)
\qbezier(20,10)(20,10)(28,18) 
\qbezier(40,30)(40,30)(32,22)
\put(25,25){\vector(-1,1){5}}
\put(35,25){\vector(1,1){5}}
\put(50,20){\makebox(0,0)[cc]{$\longrightarrow$}}
\qbezier(60,30)(70,20)(80,30)
\qbezier(60,10)(70,20)(80,10) 
\put(65,25){\vector(-1,1){5}}
\put(75,15){\vector(1,-1){5}}
\end{picture}
\caption{Smoothing.} \label{fig105}
\end{figure}

\smallskip 

\begin{defi} \cite{KPV} \label{l-pol}
{\rm 
For a diagram $D$ of a virtual oriented knot $K$ and an integer $n$, a polynomial $L_{K}^{n}(t, \ell)$ is defined as:
\begin{equation} 
L_{K}^{n}(t,\ell) = \sum_{c\in C(D)}\operatorname{sgn}(c) \left( t^{\operatorname{Ind}(c)}\ell^{\abs{\nabla J_{n}(D_{c})}}-\ell^{\abs{\nabla J_{n}(D)}} \right).  \label{eq3}
\end{equation} 
}
\end{defi} 

Note that $L$-polynomials generalize affine index polynomial, since $P_K(t) = L^n_K(t,1)$ for every $n$ and every $t$.

\begin{defi} \cite{KPV} \label{f-pol} 
{\rm For a diagram $D$ of a virtual oriented knot $K$ and an integer $n$, a polynomial $F_{K}^{n}(t, \ell)$ is defined as:
\begin{equation} 
\begin{gathered}
F_{K}^{n}(t,\ell) = \sum_{c \in C(D)} \operatorname{sgn}(c)t^{\text{Ind}(c)} \ell^{\nabla J_{n}(D_{c})}  
\qquad \qquad \qquad  \\  \qquad \qquad \qquad \qquad 
-  \sum _{c\in T_{n}(D)} \operatorname{sgn}(c) \ell^{\nabla J_{n}(D_{c})} - \sum _{c\notin T_{n}(D)} \operatorname{sgn} (c) \ell^{\nabla J_{n}(D)}, 
\end{gathered} \label{eq4} 
\end{equation} 
where  $T_{n}(D)=\{c \in C(D) :  | \nabla J_{n}(D_{c}) | \, =  \, | \nabla J_{n}(D) | \}$. 
}
\end{defi}

\begin{theorem} \cite{KPV} \label{lfth} 
For every integer $n\geq 1$ polynomials $L^{n}_{K}(t, \ell)$ and $F^{n}_{K}(t,\ell)$ are oriented virtual knot invariants.
\end{theorem}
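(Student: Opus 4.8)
The plan is to verify that the right-hand sides of \eqref{eq3} and \eqref{eq4} are unchanged under each of the generalized Reidemeister moves RI, RII, RIII, VRI, VRII, VRIII and SV; this is exactly what "oriented virtual knot invariant" means here. The data entering the two formulas are, for each classical crossing $c$ of a diagram $D$: the sign $\operatorname{sgn}(c)$, the index $\operatorname{Ind}(c)$, the integer $\nabla J_n(D)$, and the integer $\nabla J_n(D_c)$ coming from the smoothing against orientation; for $F^n_K$ one additionally needs the partition $C(D)=T_n(D)\sqcup(C(D)\setminus T_n(D))$, which depends only on whether $|\nabla J_n(D_c)|=|\nabla J_n(D)|$. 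Setting $\ell=1$ in \eqref{eq3} and in \eqref{eq4} returns $P_D(t)$ in both cases, so the ``$t$-part'' of each move is governed by exactly the ingredients Kauffman already used to prove the affine index polynomial invariant: an RI move creates a crossing $c_0$ with $\operatorname{Ind}(c_0)=0$; an RII move creates two crossings $c_1,c_2$ with $\operatorname{sgn}(c_1)=-\operatorname{sgn}(c_2)$ and $\operatorname{Ind}(c_1)=\operatorname{Ind}(c_2)$; an RIII move induces a bijection of crossing sets preserving $\operatorname{sgn}$ and $\operatorname{Ind}$; and virtual moves leave $\operatorname{sgn}$ and $\operatorname{Ind}$ literally unchanged. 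I will use these facts freely; the new content is the behaviour of the exponents of $\ell$, i.e.\ of $\nabla J_n(D)$ and $\nabla J_n(D_c)$.

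First I would dispose of the virtual moves and of SV. None of them involves a classical crossing, and a virtual move performed on $D$ induces on each $D_c$ a (possibly empty) sequence of virtual moves, while changing $D$ itself only by a virtual move. Since $\nabla J_n$ is an oriented virtual knot invariant --- indeed an oriented flat knot invariant by Lemma~\ref{lemma1} --- all of $\nabla J_n(D)$, $\nabla J_n(D_c)$ and hence the set $T_n(D)$ are preserved, so \eqref{eq3} and \eqref{eq4} are unchanged. The same local-to-global principle handles the ``bystander'' crossings of the classical moves: if $D'$ is obtained from $D$ by an RI, RII or RIII move and $c$ is a crossing not involved in the move, then the move is supported in a disk disjoint from $c$, so smoothing at $c$ commutes with it and $D_c$, $D'_c$ differ by the same kind of move; combined with the invariance of $\operatorname{sgn}(c)$, $\operatorname{Ind}(c)$, $\nabla J_n(D)$ and of membership in $T_n$, such crossings contribute equally on both sides.

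For the RI move, write $c_0$ for the new crossing. Smoothing $D$ against orientation at a kink undoes the kink: the resulting diagram is flat-equivalent to $D$ (up to virtual moves and a flat RI), whence $|\nabla J_n(D_{c_0})|=|\nabla J_n(D)|$ and in particular $c_0\in T_n(D)$. Since $\operatorname{Ind}(c_0)=0$, the $c_0$-summand of \eqref{eq3} is $\operatorname{sgn}(c_0)\bigl(\ell^{|\nabla J_n(D)|}-\ell^{|\nabla J_n(D)|}\bigr)=0$, and in \eqref{eq4} the $c_0$-summand of the first sum coincides with that of the $T_n(D)$-sum, so they cancel. For the RII move the two new crossings $c_1,c_2$ satisfy $\operatorname{sgn}(c_1)=-\operatorname{sgn}(c_2)$ and $\operatorname{Ind}(c_1)=\operatorname{Ind}(c_2)$, and the crucial task is to compare $\nabla J_n(D_{c_1})$ with $\nabla J_n(D_{c_2})$: smoothing against orientation at $c_1$ converts $c_2$ into a kink, which is removed by a flat RI, and symmetrically with the roles reversed. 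Analysing this small tangle --- separately for co-oriented and for anti-oriented strands of the move, checking that the smoothed diagram stays connected, and invoking Lemmas~\ref{lemma1} and~\ref{lemma2} --- shows that the $c_1$- and $c_2$-summands annihilate one another in \eqref{eq3} and in \eqref{eq4}. Finally, for the RIII move the sign- and index-preserving bijection $\phi$ of crossing sets also has the property that $D_c$ and $D'_{\phi(c)}$ represent the same virtual knot for every $c$ (the move is local), so $\nabla J_n(D_c)=\nabla J_n(D'_{\phi(c)})$ and $\nabla J_n(D)=\nabla J_n(D')$, $T_n$ corresponds under $\phi$, and \eqref{eq3}, \eqref{eq4} agree summand by summand.

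The main obstacle is the RII step: everything else reduces to bookkeeping with the already-known identities for $\operatorname{sgn}$ and $\operatorname{Ind}$ together with the flat invariance of $\nabla J_n$, whereas the RII case requires understanding precisely how the against-orientation smoothing interacts with an empty bigon, and in particular pinning down $\nabla J_n(D_{c_1})$ and $\nabla J_n(D_{c_2})$ so that the corresponding summands cancel. The one subtle input there is that ``smoothing against orientation'' never disconnects the diagram, so that $D_c$ genuinely is a knot diagram and Lemma~\ref{lemma1} is applicable; granting this, the cancellation is a short computation to be carried out case by case on the local orientations at the bigon.
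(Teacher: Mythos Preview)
The paper does not give its own proof of this theorem: it is stated with the citation \cite{KPV} and used as a black box, so there is nothing in the present paper to compare your argument against. Your sketch follows the standard route taken in \cite{KPV}, namely checking invariance under each generalized Reidemeister move by tracking $\operatorname{sgn}$, $\operatorname{Ind}$, $\nabla J_n(D)$ and $\nabla J_n(D_c)$, and it is along the right lines.

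One small overclaim worth tightening: in the RIII paragraph you assert that $D_c$ and $D'_{\phi(c)}$ ``represent the same virtual knot''. What you actually need, and what the local analysis gives, is only that they are \emph{flat} equivalent (indeed, after the against-orientation smoothing the remaining two crossings of the triangle can be simplified by flat RII or a further flat RIII, matching the two sides); then Lemma~\ref{lemma1} yields $\nabla J_n(D_c)=\nabla J_n(D'_{\phi(c)})$. Asserting full virtual equivalence is stronger than required and not obviously true without further argument. With that adjustment, and with the RII case-check you flag as the main obstacle carried out, your outline matches the KPV proof.
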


\section{Totally  flat-trivial knots} \label{sec2} 

Let $D$ be a diagram of oriented virtual knot $K$ and $C(D)$ a set of all classical crossings in $D$.
\begin{defi} {\rm 
We will call $D$ \emph{totally flat-trivial} if diagrams obtained from $D$ and $D_{c}$ for all $c 
\in C(D)$ by forgetting over/under crossing information are flat equivalent to unknot. Virtual knot $K$ is said to be \emph{totally flat-trivial}, if it admits a totally flat-trivial diagram.
}
\end{defi}
 
 \begin{lemma}  \label{lemma3-2}
 If virtual knot $K$ is totally flat-trivial, then
 \begin{itemize}
\item[(1)] For all $n \geq 1$ we have $L^{n}_{K} (t, \ell)= P_{K} (t)$ and $F^{n}_{K} (t, \ell)= P_{K}(t)$.
\item[(2)] $P_{K}(t)$ is palindromic. 
\end{itemize} 
 \end{lemma}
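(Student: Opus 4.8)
The plan is to reduce both parts to one observation: since $D$ is totally flat-trivial, the flat diagrams underlying $D$ and underlying every smoothing $D_c$, $c\in C(D)$, are flat equivalent to the trivial flat knot, and therefore, by the flat invariance of the $n$-dwrithe (Lemma~\ref{lemma1}),
\[
\nabla J_n(D)=0 \qquad\text{and}\qquad \nabla J_n(D_c)=0 \quad\text{for all } n\geq 1 \text{ and all } c\in C(D).
\]
I would state this as Step~1. It uses only that a crossingless diagram represents the trivial flat knot, so $\nabla J_n$ of it vanishes (equivalently, apply Remark~\ref{rem2.1} to the unknot), together with the fact that smoothing against orientation yields an honest virtual knot diagram, so that the flat class of each $D_c$ is well defined and Lemma~\ref{lemma1} applies to it.

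Granting Step~1, part~(1) is pure substitution. In the definition~\eqref{eq3} of $L^n_K$ every exponent of $\ell$ equals $\abs{\nabla J_n(\cdot)}=0$, so each summand collapses to $\operatorname{sgn}(c)\bigl(t^{\operatorname{Ind}(c)}-1\bigr)$ and we obtain $L^n_K(t,\ell)=\sum_{c\in C(D)}\operatorname{sgn}(c)\bigl(t^{\operatorname{Ind}(c)}-1\bigr)=P_D(t)=P_K(t)$ by~\eqref{eq2}. For $F^n_K$ I would first note that $T_n(D)=\{c\in C(D):\abs{\nabla J_n(D_c)}=\abs{\nabla J_n(D)}\}=C(D)$, since both quantities are $0$; hence the last sum in~\eqref{eq4} is empty, the surviving powers of $\ell$ are again $\ell^0=1$, and~\eqref{eq4} reduces to $\sum_{c}\operatorname{sgn}(c)t^{\operatorname{Ind}(c)}-\sum_{c}\operatorname{sgn}(c)=P_D(t)=P_K(t)$.

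For part~(2) I would again invoke Step~1. Grouping~\eqref{eq2} by the value of $\operatorname{Ind}$, the coefficient of $t^m$ in $P_K(t)$ for $m\neq 0$ is exactly the $m$-writhe $J_m(D)$, while the constant term is $-\sum_{m\neq 0}J_m(D)$, which is fixed by $t\mapsto t^{-1}$. Since $\nabla J_{\abs{m}}(D)=0$ says precisely $J_m(D)=J_{-m}(D)$, the coefficients of $t^m$ and $t^{-m}$ coincide, so $P_K(t)=P_K(t^{-1})$, i.e.\ $P_K$ is palindromic.

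I do not expect a genuine obstacle here: the whole argument rests on Step~1, and the only points needing care are bookkeeping ones — matching the combinatorial hypothesis (flat-triviality of $D$ \emph{and} of each $D_c$) to the exact input of Lemma~\ref{lemma1}, and verifying that every exponent of $\ell$ appearing in~\eqref{eq3} and~\eqref{eq4}, whether wrapped in absolute values or not, is forced to be $0$ so that all $\ell$-dependence disappears.
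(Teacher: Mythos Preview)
Your proposal is correct and follows essentially the same route as the paper: both arguments first use the flat invariance of $\nabla J_n$ (Lemma~\ref{lemma1}) to conclude $\nabla J_n(D)=\nabla J_n(D_c)=0$, then substitute into the defining formulas~\eqref{eq2}--\eqref{eq4} for part~(1), and read off $J_n(D)=J_{-n}(D)$ for part~(2). Your write-up is slightly more explicit (e.g.\ spelling out $T_n(D)=C(D)$), but there is no substantive difference in strategy.
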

 
 \begin{proof}
 (1) Let $D$ be a totally flat-trivial diagram of a knot $K$, $C(D)$ a set of all its classical crossings, and $D_{c}$ a diagram, obtained by smoothing against orientation in crossing $c \in C(D)$. By the definition, all these diagrams are flat-equivalent to a trivial knot. By Lemma~\ref{lemma1}  we have $\nabla J_{n}(D) =0$ and $\nabla J_{n} (D_{c}) =0$ for all $c \in C(D)$. From these equalities and formulas (\ref{eq2}), (\ref{eq3}), and (\ref{eq4}) we obtain  that $F^{n}_{K} (t, \ell) = L^{n}_{K} (t, \ell)= P_{K}$
 
 (2) It was mentioned above that $J_n (D)$ is a coefficient of $t^n$ in affine index polynomial. By the equality  $\nabla J_{n} (D) = J_{n} (D) - J_{-n} (D) = 0$, coefficients of $t^{n}$ and $t^{-n}$ coincide and $P_{K}(t)$ is palindromic.
 \end{proof}
 
Recall the following properties of affine index polynomial. Let $\bar{D}$ be a diagram, obtained from $D$ by reversing orientation and $D^*$ is obtained by switching all classical crossings.

\begin{lemma} \cite{kauffman2013affine} 
The following equalities hold
$$
P_{\bar{K}} (t) = P_{K} (t^{-1}) \qquad \text{and} \qquad P_{K^{*}} (t) = - P_{K} (t). 
$$
\end{lemma}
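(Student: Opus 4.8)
The plan is to fix a diagram $D$ of $K$ and to compare $P_{D}(t)$ with $P_{\bar D}(t)$ and with $P_{D^{*}}(t)$ term by term in the defining sum~(\ref{eq2}). Since $\bar D$ is a diagram of $\bar K$ and $D^{*}$ is a diagram of $K^{*}$, and $P$ is already known to be a virtual knot invariant, it suffices to record how the two ingredients $\operatorname{sgn}(c)$ and $\operatorname{Ind}(c)$ of~(\ref{eq2}) change under each of the two operations.

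The case of $D^{*}$ is the short one. Switching over/under at a crossing interchanges the two local pictures of Fig.~\ref{fig102}, so $\operatorname{sgn}_{D^{*}}(c)=-\operatorname{sgn}_{D}(c)$ for every $c$. On the other hand, as remarked right after~(\ref{eq2.2}), the Cheng coloring --- and hence $\operatorname{Ind}(c)$ --- is a function of the underlying \emph{oriented flat} diagram only, and passing from $D$ to $D^{*}$ does not change that flat diagram; thus $\operatorname{Ind}_{D^{*}}(c)=\operatorname{Ind}_{D}(c)$. Substituting into~(\ref{eq2}) gives $P_{D^{*}}(t)=\sum_{c}\bigl(-\operatorname{sgn}_{D}(c)\bigr)\bigl(t^{\operatorname{Ind}_{D}(c)}-1\bigr)=-P_{D}(t)$.

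For $\bar D$ the signs are again immediate: reversing the orientation of $K$ reverses both strands through $c$ at once, which is a $180^{\circ}$ rotation of the local picture of Fig.~\ref{fig102}, so $\operatorname{sgn}_{\bar D}(c)=\operatorname{sgn}_{D}(c)$. The one substantive point is $\operatorname{Ind}_{\bar D}(c)=-\operatorname{Ind}_{D}(c)$, which I would get directly from the Cheng coloring. Traversing $\bar D$ amounts to traversing $D$ backwards, so the first time one meets a crossing one meets the opposite passage from before; hence, starting from any arc, the crossings first met as overcrossings along $\bar D$ are exactly those first met as \emph{under}crossings along $D$, and since these two options partition $C(D)$ one gets $\lambda_{\bar D}(\cdot)=w(D)-\lambda_{D}(\cdot)$ with $w(D)=\sum_{c}\operatorname{sgn}(c)$. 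In particular $\lambda_{\bar D}$ and $-\lambda_{D}$ differ by a global constant, so every \emph{difference} of arc labels changes sign. At a fixed crossing, reversal keeps the over/under roles but, via the same $180^{\circ}$ rotation, interchanges the incoming arcs with the outgoing ones; reading this off the local relations in Fig.~\ref{fig103} (which are the same for positive and negative crossings) replaces the value $a-b$ by $-(a-b)+2$, whence $a_{\bar D}-b_{\bar D}-1=-(a_{D}-b_{D}-1)$ and, multiplying by the unchanged sign, $\operatorname{Ind}_{\bar D}(c)=-\operatorname{Ind}_{D}(c)$. Substituting into~(\ref{eq2}) yields $P_{\bar D}(t)=\sum_{c}\operatorname{sgn}_{D}(c)\bigl(t^{-\operatorname{Ind}_{D}(c)}-1\bigr)=P_{D}(t^{-1})$.

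I expect the only step beyond routine bookkeeping to be the behaviour of $\operatorname{Ind}(c)$ under orientation reversal. A cleaner alternative for that step is to use the description of $\operatorname{Ind}(c)$ as the signed intersection number of the two virtual knot components produced by the \emph{oriented} smoothing at $c$: reversing the orientation of $K$ has no effect on the relevant crossing signs but interchanges the roles of the component one leaves $c$ along and the component one returns to $c$ along, which reverses that signed count, giving $\operatorname{Ind}_{\bar D}(c)=-\operatorname{Ind}_{D}(c)$ in one line.
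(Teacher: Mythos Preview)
The paper does not give its own proof of this lemma: it is simply quoted from \cite{kauffman2013affine} and used as a known fact. So there is nothing in the paper to compare your argument against.

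That said, your proof is correct and is essentially the standard one. The mirror case is immediate, as you say, because the Cheng coloring (and hence $\operatorname{Ind}$) depends only on the oriented flat shadow. For orientation reversal your key claims are (i) $\operatorname{sgn}_{\bar D}(c)=\operatorname{sgn}_{D}(c)$, (ii) $\lambda_{\bar D}=w(D)-\lambda_{D}$, and (iii) after the $180^{\circ}$ rotation the incoming arcs of $\bar D$ at $c$ are the former outgoing arcs, so that $a_{\bar D}-b_{\bar D}=-(a_D-b_D)+2$; these combine to give $\operatorname{Ind}_{\bar D}(c)=-\operatorname{Ind}_{D}(c)$ exactly as you write. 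Your final alternative, via the signed intersection number of the two components of the oriented smoothing at $c$, is also a perfectly good (and arguably cleaner) way to see $\operatorname{Ind}_{\bar D}(c)=-\operatorname{Ind}_{D}(c)$.

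One minor presentational point: the step ``$a-b$ is replaced by $-(a-b)+2$'' already uses both the relabeling $\lambda_{\bar D}=w-\lambda_D$ \emph{and} the swap of incoming/outgoing arcs; as written it could be misread as coming from the rotation alone. Spelling this out (new $a$-position carries $\bar D$-label $w-(a-1)$, new $b$-position carries $w-(b+1)$, hence $a_{\bar D}-b_{\bar D}-1=-(a_D-b_D-1)$) would make the computation easier to follow.
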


\smallskip 

\section{Polynomial invariants of Akimova-Matveev knots} \label{sec3} 

Prime knots in thickened torus $T \times I$, that is a product of 2-dimensional torus $T$ and the unit interval $I = [0,1]$, admitting diagrams with at most five classical crossings were tabulated by Akimova and Matveev in \cite{akimova}. The total number of these diagrams is equal to $90$. Due to Kuperberg's result~\cite{Kuper}, it is equivalent to tabulating prime virtual knots of genus one. To distinguish the knots, Akimova and Matveev introduced for diagrams on a torus an analogue of bracket polynomial.  These diagrams, pictured on a plane using virtual crossing are given in~\cite[Fig. 17]{akimova}. For reader's convenience we present them in Tables~\ref{table-3} and~\ref{table-4}.

\begin{theorem} \label{theorem3-1}
Every Akimova~--Matveev knot is totally flat-trivial.  
\end{theorem}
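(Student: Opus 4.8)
The strategy is to reduce the theorem to a finite inspection of the $90$ Akimova--Matveev diagrams (collected in Tables~\ref{table-3} and~\ref{table-4}) and then to carry out that inspection on the carrier torus, where the relevant bookkeeping becomes homological and hence routine.

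Fix such a diagram $D$, drawn on the torus $T$, and let $|D|\subset T$ be the immersed curve obtained by forgetting the over/under information; for every $c\in C(D)$ the smoothed diagram $D_c$ is again drawn on $T$. By definition $D$ is totally flat-trivial precisely when all of the flat knots $|D|$ and $|D_c|$, $c\in C(D)$, are trivial, and since the Akimova--Matveev diagrams are given to us explicitly it is enough to check this for them. The basic tool is the standard fact that a closed curve on $T$ which is homotopic on $T$ into an embedded annulus represents the trivial flat virtual knot: one compresses a neighborhood of the core of the complementary annulus, thereby destabilizing $T$ down to $S^2$, and every flat knot on $S^2$ is trivial (the two regular-homotopy classes of immersed circles in $S^2$ become identified once the first Reidemeister move is allowed). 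In particular a null-homotopic curve on $T$ is flat-trivial, and on $T$ a curve is null-homotopic if and only if it is null-homologous, since $\pi_1(T)=H_1(T)\cong\mathbb{Z}^2$.

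To bring the smoothings under the same umbrella, observe that a crossing $c$ splits $|D|$ into two sub-arcs $\alpha_c,\beta_c$ with $[|D|]=[\alpha_c]+[\beta_c]$ in $H_1(T)$; the oriented (Seifert) smoothing at $c$ produces the two disjoint loops $\alpha_c,\beta_c$, whereas $D_c$ reconnects them with one orientation reversed, so that $|D_c|$ is the single loop $\alpha_c\ast\overline{\beta_c}$ and $[|D_c|]=[\alpha_c]-[\beta_c]$. Consequently it suffices to verify, for every Akimova--Matveev diagram $D$ and every $c\in C(D)$, that both loops $\alpha_c$ and $\beta_c$ of the oriented smoothing are null-homologous on $T$: this yields $[|D|]=0$ and $[|D_c|]=0$ at once, so all the curves $|D|$ and $|D_c|$ are null-homotopic, hence flat-trivial by the previous paragraph. (Should some curve turn out to be essential but annular rather than null-homotopic, the annulus criterion of the previous paragraph still covers it directly.)

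It therefore remains to run through the $90$ diagrams of Tables~\ref{table-3} and~\ref{table-4} and confirm the displayed condition. For a single diagram this is mechanical: the carrier torus and the homology classes $[\alpha_c]$ and $[\beta_c]$ are read off from the Gauss data of $D$, and one records that they vanish. The main obstacle is simply the size of the computation --- $90$ diagrams with up to five crossings apiece amount to roughly five hundred loops --- so the verification is best organized either by automating the homology computation (and, where needed, an explicit sequence of flat moves onto the standard circle), in the spirit of the software used in~\cite{IV} for the tables of~\cite{dye} and~\cite{Green}, or by first sorting the Akimova--Matveev diagrams into the few structural families from which they are assembled and handling each family uniformly; one should also take some care that the planar pictures with virtual crossings faithfully record the embeddings into $T$. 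A cheap consistency check available at every stage is that $\nabla J_n(D)=\nabla J_n(D_c)=0$ for all $n$, which is forced by Lemma~\ref{lemma1}.
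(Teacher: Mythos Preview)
There is a genuine gap: the sufficient condition you propose to verify is never satisfied by these diagrams. For a diagram realised on the torus the index of a crossing $c$ equals the algebraic intersection number $[\alpha_c]\cdot[\beta_c]$ in $H_1(T;\mathbb{Z})$; hence whenever $\operatorname{Ind}(c)\neq 0$ neither $[\alpha_c]$ nor $[\beta_c]$ can vanish, and then $[|D|]=[\alpha_c]+[\beta_c]$ does not vanish either. A glance at Table~\ref{table-2} shows that almost every Akimova--Matveev knot has a crossing of nonzero index, so the ``mechanical'' computation you set up --- confirm that $[\alpha_c]=[\beta_c]=0$ for every $c$ --- will fail at the first step rather than succeed. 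Your homological reduction is therefore not the right criterion for these knots; the curves $|D|$ and $|D_c|$ are essential on $T$, not null-homotopic.

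What you relegate to a parenthetical contingency is in fact the whole argument, and it needs no case-by-case verification at all. On the torus \emph{every} closed curve is homotopic into an embedded annulus: if $[\gamma]=0$ it contracts into a disk, and if $[\gamma]=(p,q)\neq 0$ then $\gamma$ is freely homotopic to the $\gcd(p,q)$-fold iterate of the simple closed curve of primitive slope $(p/\gcd,q/\gcd)$, which lies in a tubular annulus. Your own annulus--destabilisation criterion then shows that every flat knot carried by $T$ is trivial, so every genus-one virtual knot is automatically totally flat-trivial. Assembled this way, your ingredients prove strictly more than Theorem~\ref{theorem3-1}; they give an affirmative answer to the paper's Question~\ref{quest}. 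But as written you do not make this observation, and the verification you actually propose does not go through.

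For comparison, the paper's proof is of an entirely different and much more concrete character: it sorts the $90$ diagrams into $40$ flat-projection classes (Table~\ref{table-1}), notes that a crossing change merely reverses the orientation of the smoothed diagram so one representative per class suffices, and then checks by hand, class by class, that $|D|$ and each $|D_c|$ simplify to the unknot via explicit flat Reidemeister moves. No homology and no destabilisation argument appears.
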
 

\begin{proof} 
It's easy to see, that forgetting the information of over/under crossings in diagrams from Tables~\ref{table-3} and~\ref{table-4} leads us to 40 different diagrams of flat knots as in Table~\ref{table-1}. 
\begin{table}[h!]
\caption{Classes of diagrams.} \label{table-1} 
\begin{tabular}{|c|c||c|c||c|c||c|c||c|c|} \hline 
 & \text{knot} &  & \text{knot} &   & \text{knot} &  & \text{knot} &  & \text{knot}   \\ \hline 
1& 2.1 & 9&  4.10, 4.11 & 17&  5.10 & 25&  5.23, 5.24 & 33&  5.40-5.42 \\ \hline 
2&  3.1 & 10&  4.12-4.14 & 18&  5.11 & 26&  5.25, 5.26 & 34&  5.43-5.46 \\ \hline
3&  3.2, 3.3 & 11&  4.15-4.17 & 19&  5.12 & 27&  5.27-5.29 & 35&  5.47-5.50 \\ \hline 
4&  4.1& 12&  5.1, 5.2 & 20&  5.13 & 28&  5.30 & 36&  5.51-5.53 \\ \hline
5& 4.2& 13&  5.3, 5.4 & 21& 5.14 &  29&  5.31 &  37&  5.54-5.59 \\ \hline
6& 4.3 &  14&  5.5 & 22&  5.15, 5.16 & 30&  5.32, 5.33 & 38&  5.60-5.65 \\ \hline 
7&  4.4, 4.5 & 15&  5.6, 5.7 & 23&  5.17, 5.18 &  31&  5.34-5.37 &  39&  5.66-5.68 \\ \hline 
8&  4.6-4.9 & 16&  5.8, 5.9 &  24&  5.19-5.22 &  32&  5.38, 5.39 & 40& 5.69 \\ \hline
\end{tabular} 
\end{table}
Further we consider each of these classes separately and prove them to be totally flat-trivial. Changing type of a crossing leads to changing in orientation of a knot, obtained by smoothing at the crossing. Thereby it is sufficient to consider just one member from each of 40 classes to prove the theorem for the all 90 knots. 

As an example we consider a virtual knot $K = 5.17$ pictured in Fig.~\ref{fig5}.  

\smallskip 

\begin{figure}[h]
 \includegraphics[height=2.2cm]{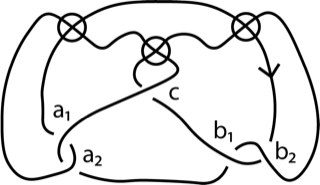}  
 \caption{Diagram of a virtual knot $K = 5.17$. } \label{fig5}
\end{figure}

It's easy to see, that it is flat-trivial.  
Then we consider all the diagrams obtained by smoothings in classical crossings. There are five classical crossings denoted as $a_{1}$, $a_{2}$, $b_1$, $b_{2}$, $c$.  Diagrams, obtained by smoothings at $a_{1}$, $b_{2}$ and $c$ are shown in the picture~\ref{fig6}.

\begin{figure}[h]
 \includegraphics[height=2.2cm]{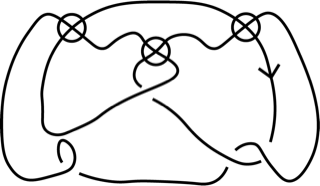}  \quad  \includegraphics[height=2.2cm]{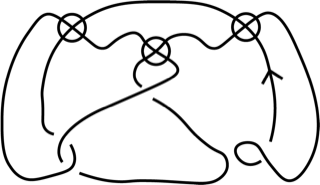} \quad  \includegraphics[height=2.2cm]{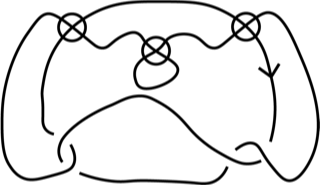}
 \caption{Diagrams, obtained by smoothings at crossings $a_{1}$, $b_{1}$ and $c$, respectively.} \label{fig6}
\end{figure} 

As one can see, all diagrams in Fig.~\ref{fig6} are also flat-trivial. Similarly, diagrams obtained by smoothing at $a_{2}$ and $b_{2}$ are also flat-trivial. Hence, virtual knot $K=5.17$ is totally flat-trivial.
Analogous considerations for knots from other classes show that they are all totally flat-trivial, and thus all Akimova-Matveev knots are totally  flat-trivial.
\end{proof}

Theorem~\ref{theorem3-1} and Lemma~\ref{lemma3-2} allow us o obtain the following properties of $L$-poly\-no\-mials, $F$-polynomials and affine index polynomial of Akimova~-- Matveev knots.

\begin{corollary} \label{corollary-3.2} 
Let $K$ be a genus one knot admitting a diagram with at most five crossings. Then for every $n\geq 1$ its $L$ polynomials and $F$-polynomials coincide with affine index polynomial, presented in Table~\ref{table-2}, where knots are splitted in groups with respect to the value of polynomials for the knot $K$ or its mirror image~$K^{*}$. 
\end{corollary}

\begin{table}[h!]
\caption{Polynomial invariants.} \label{table-2} 
\begin{tabular}{|c|c|} \hline 
\text{knot} $K$ & \mbox{polynomial} $P_{K}(t)$ \\ \hline 
4.4, 4.5, 5.15, 5.16, 5.27, 5.28, 5.29, 5.30,  & \\ 
5.31, 5.45, 5.47, 5.48, 5.67, 5.69  & $0$ \\  \hline
2.1, 3.1, 4.2, 5.6, 5.7*, 5.10*,  5.13*, 5.19, & \\ 
5.20, 5.21*, 5.22, 5.23, 5.24*, 5.43, 5.46   & $t^{-1}-2+t$ \\ \hline 
4.1*, 4.3, 5.5, 5.12, 5.44 & $2t^{-1}-4+2t$ \\ \hline 
3.2, 3.3, 4.6, 4.9, 4.10, 4.11, 5.3, 5.4*, 5.8,  5.9*, & \\ 
5.14, 5.17, 5.18*, 5.32, 5.33, 5.34, 5.37, 5.49, 5.66 & $t^{-2}-2+t^{2}$ \\ \hline 
5.1, 5.2, 5.11, 5.25, 5.26, 5.50, 5.68  & $2t^{-2}-4+2t^{2}$ \\ \hline
4.8, 5.35*, 5.39* & $t^{-2}-t^{-1}-t+t^{2}$ \\ \hline
4.7, 5.36, 5.38  & $t^{-2}+t^{-1}-4+t+t^{2}$ \\ \hline 
4.13, 4.15, 4.16, 4.17, 5.40, 5.41, 5.42*, 5.52, 5.53*  & $t^{-3}-2+t^{3}$ \\ \hline 
4.14 & $t^{-3}-t^{-1}-t+t^{3}$ \\   \hline 
4.12, 5.51 & $t^{-3}+t^{-1}-4+t+t^{3}$ \\ \hline 
5.54, 5.57, 5.60, 5.61, 5.62, 5.63, 5.64*, 5.65 & $t^{-4}-2+t^{4}$ \\ \hline
5.56, 5.68* & $t^{-4}-t^{-2}-t^{2}+t^{4}$ \\ \hline
5.55, 5.59 & $t^{-4}+t^{-2}-4+t^{2}+t^{4}$ \\ \hline
\end{tabular}
\end{table}

\begin{question} \label{quest} 
Is it true, that every virtual knot of genus one is totally flat trivial?
\end{question}

\begin{table}[h!]
\caption{Diagrams of Akimova~-- Matveev knots (I).} \label{table-3}
\begin{tabular}{|c|c|c|c| } \hline
 \includegraphics[height=1.5cm]{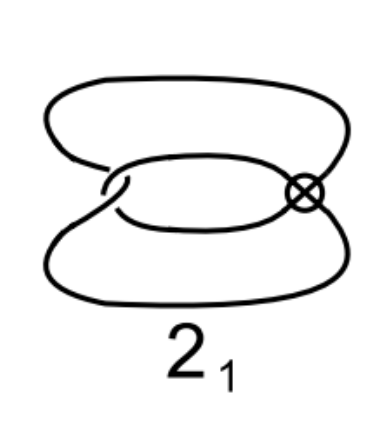} & 
 \includegraphics[height=1.5cm]{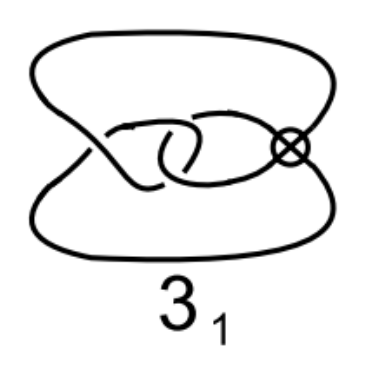} & 
\includegraphics[height=1.5cm]{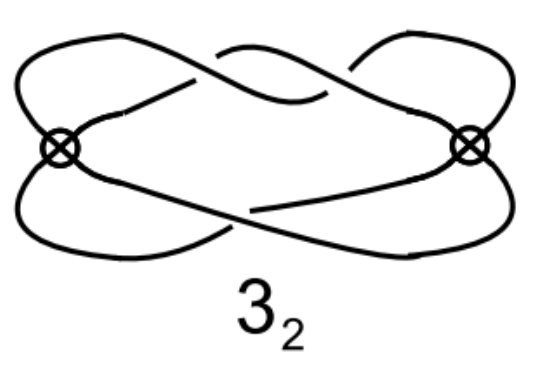} &
\includegraphics[height=1.5cm]{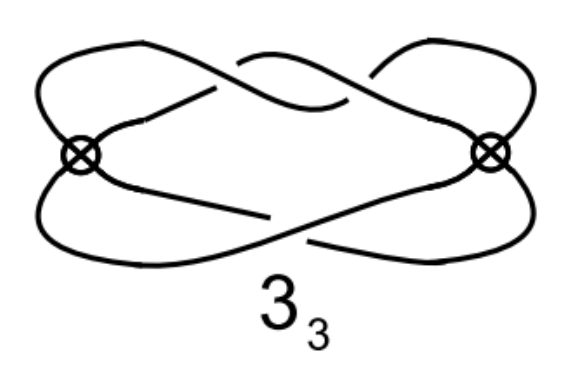} \\ \hline 
\includegraphics[height=1.5cm]{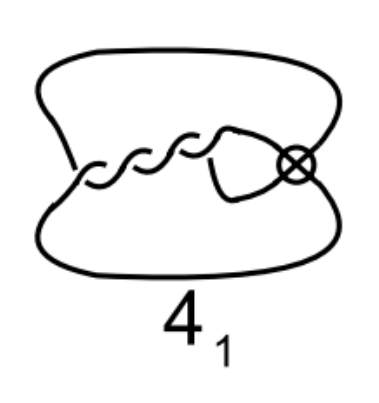}  & 
\includegraphics[height=1.5cm]{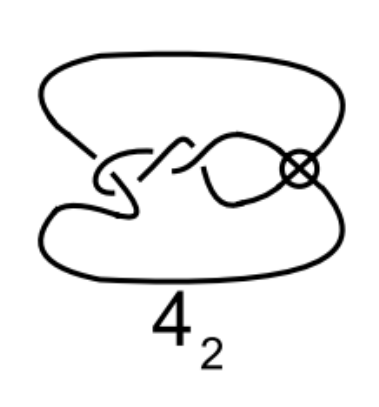} &
\includegraphics[height=1.5cm]{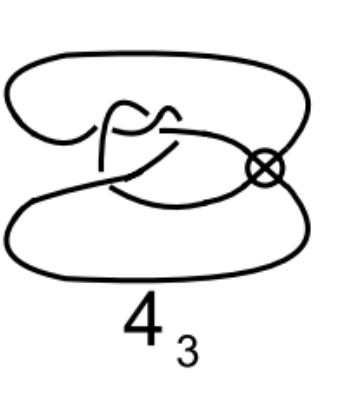} &
\includegraphics[height=1.5cm]{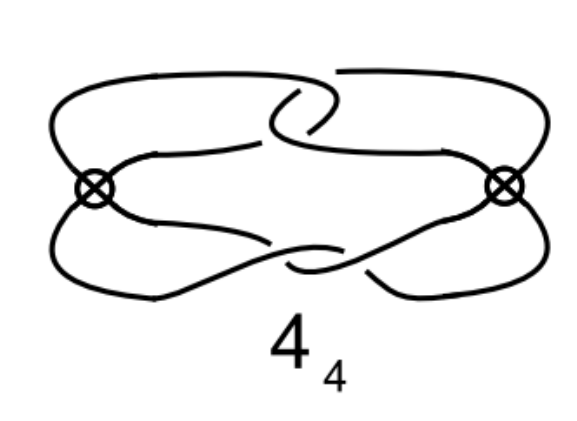}  \\ \hline 
\includegraphics[height=1.5cm]{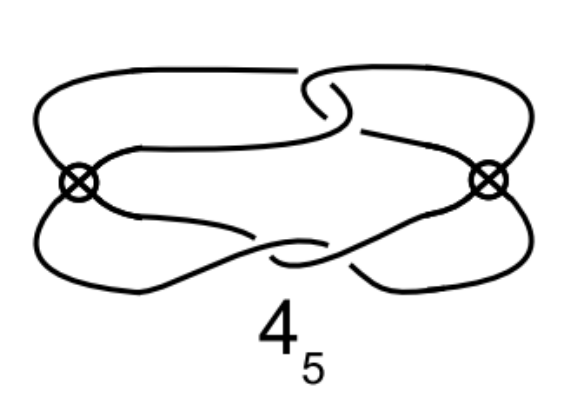} &
\includegraphics[height=1.5cm]{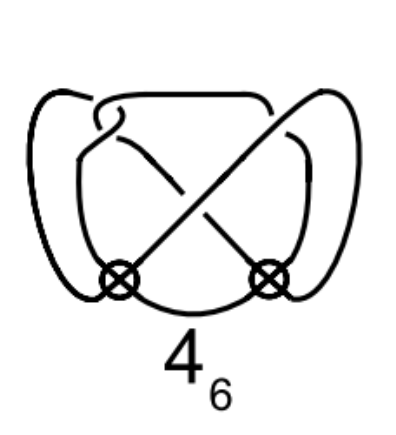} & 
\includegraphics[height=1.5cm]{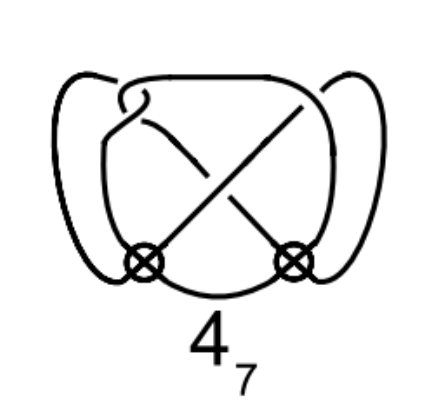} &
\includegraphics[height=1.5cm]{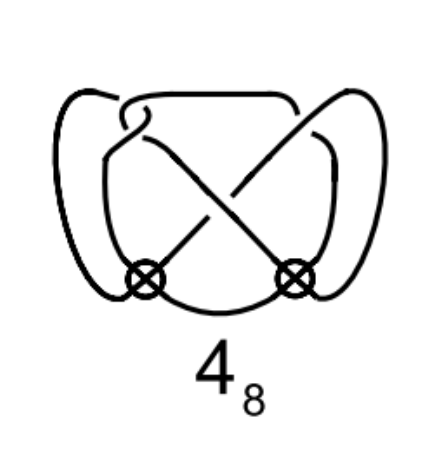} \\ \hline
\includegraphics[height=1.5cm]{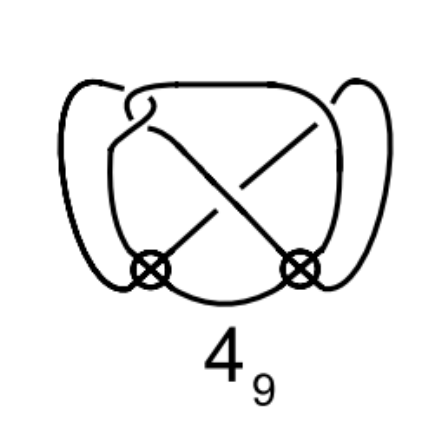} &
\includegraphics[height=1.5cm]{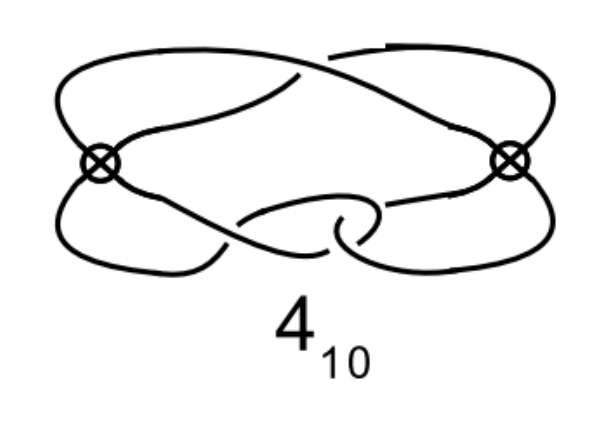} &
\includegraphics[height=1.5cm]{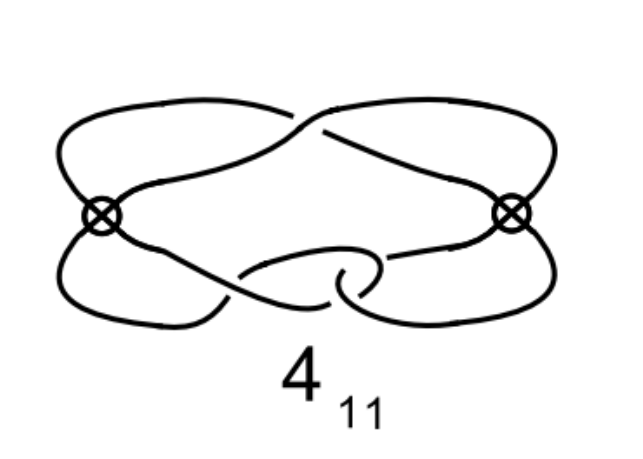} & 
\includegraphics[height=1.5cm]{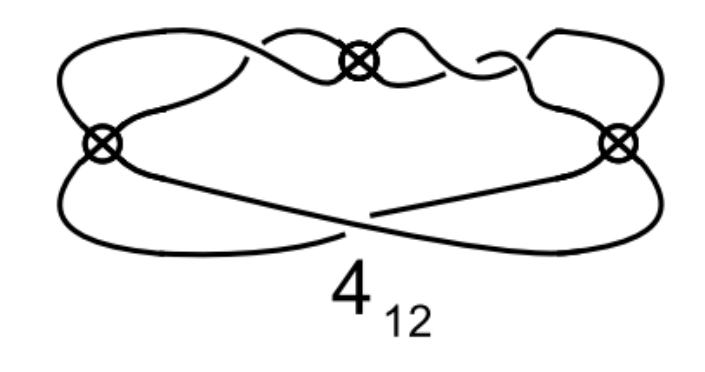} \\ \hline
\includegraphics[height=1.5cm]{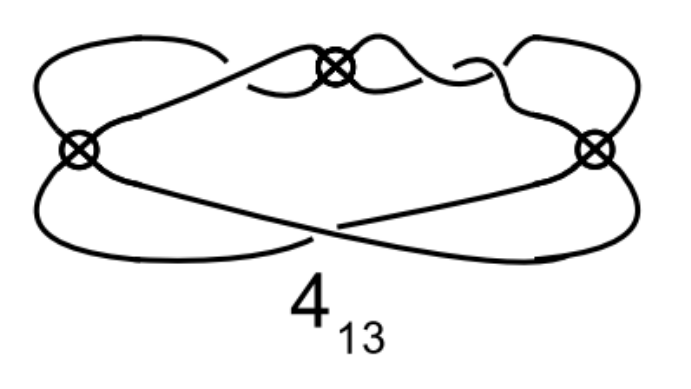} &
\includegraphics[height=1.5cm]{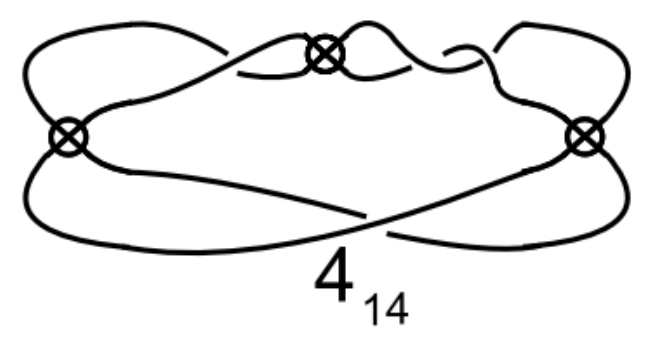} &
\includegraphics[height=1.5cm]{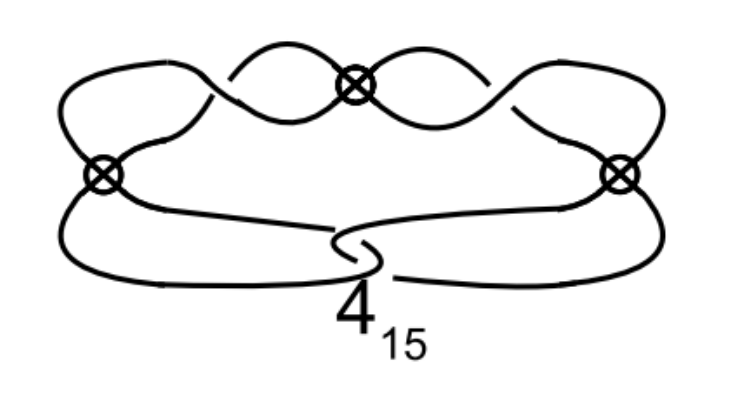} &
\includegraphics[height=1.5cm]{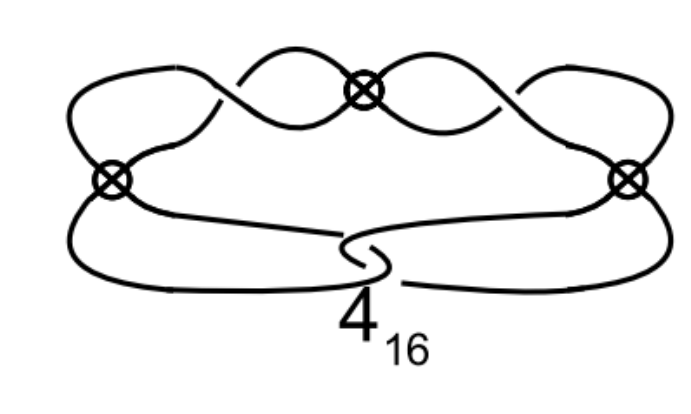} \\  \hline
\includegraphics[height=1.5cm]{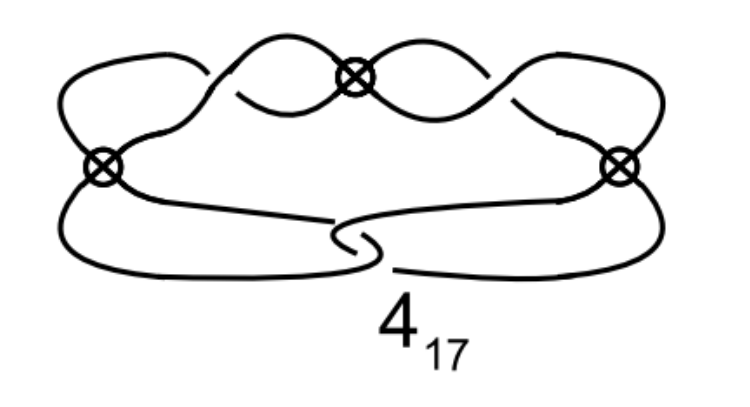} &
\includegraphics[height=1.5cm]{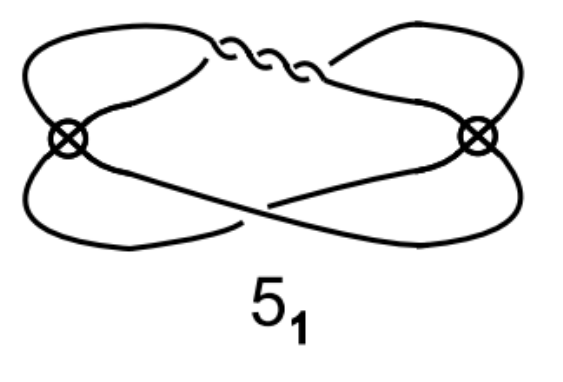} &
\includegraphics[height=1.5cm]{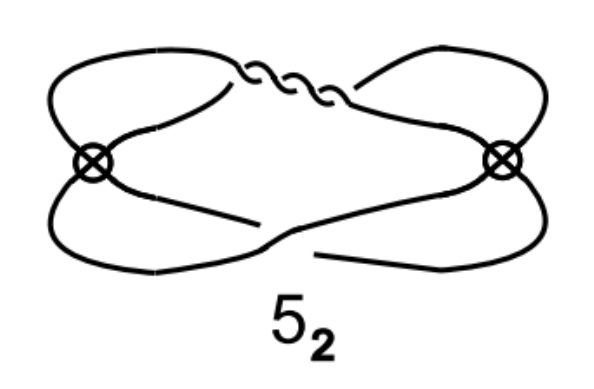} &
\includegraphics[height=1.5cm]{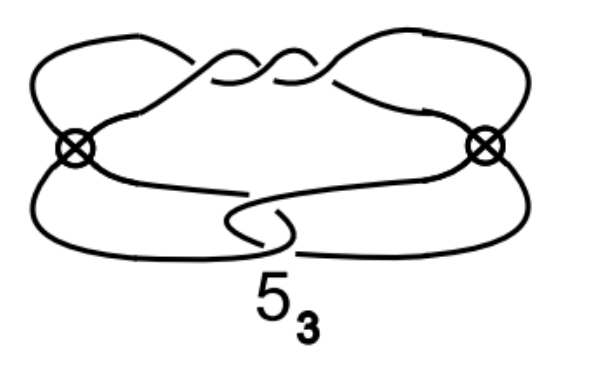} \\ \hline
\includegraphics[height=1.5cm]{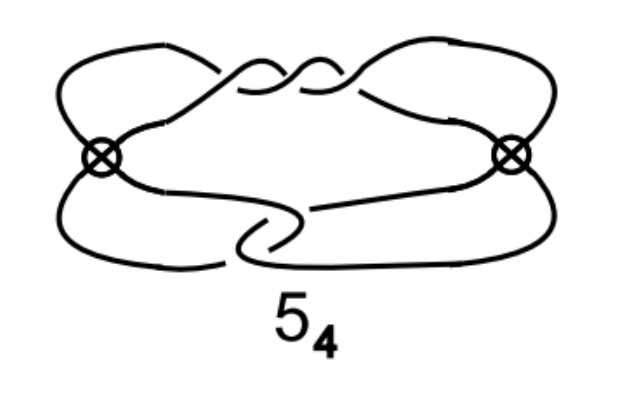} & 
\includegraphics[height=1.5cm]{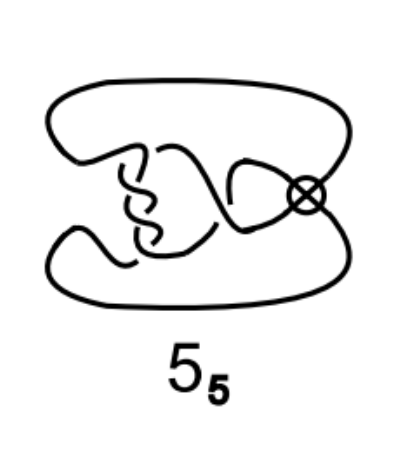} &
\includegraphics[height=1.5cm]{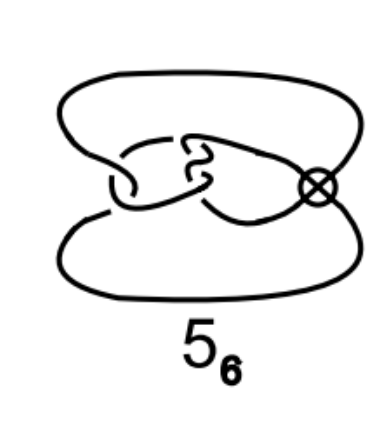} &
\includegraphics[height=1.5cm]{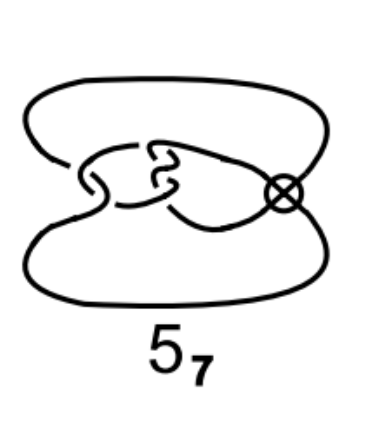} \\ \hline
\includegraphics[height=1.5cm]{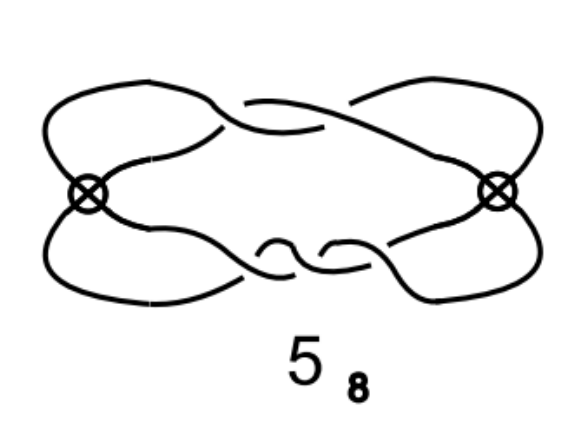} &
\includegraphics[height=1.5cm]{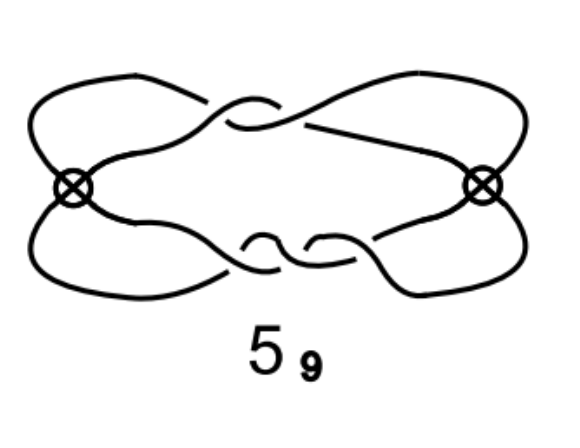} &
\includegraphics[height=1.5cm]{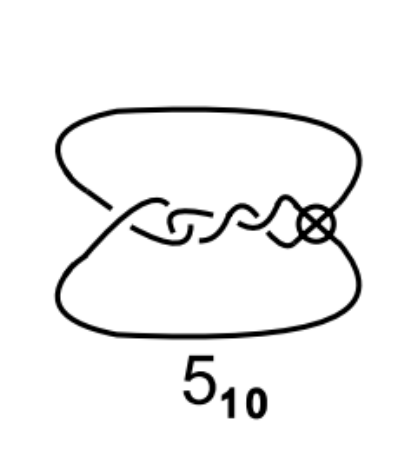} &
\includegraphics[height=1.5cm]{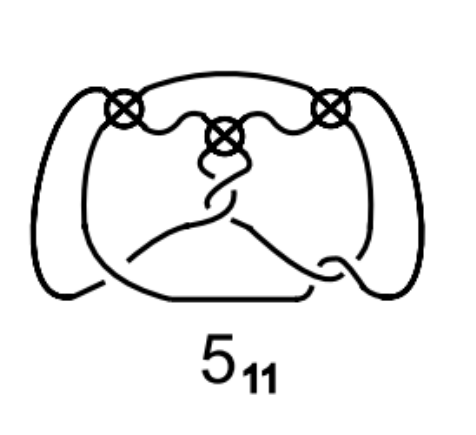} \\ \hline
\includegraphics[height=1.5cm]{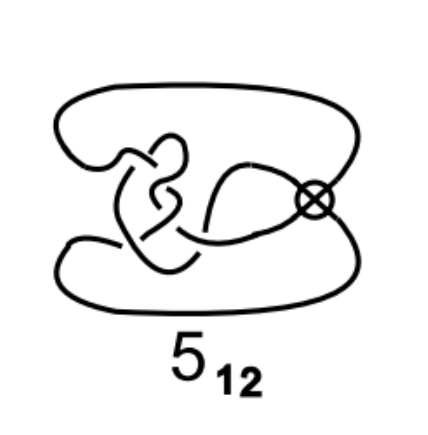} &
\includegraphics[height=1.5cm]{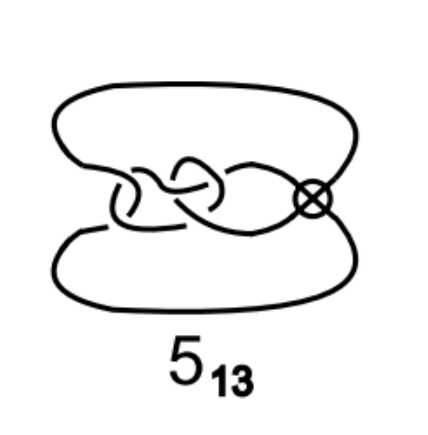} &
\includegraphics[height=1.5cm]{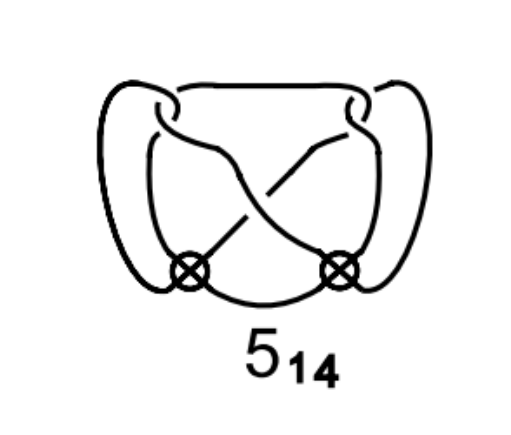} &  
\includegraphics[height=1.5cm]{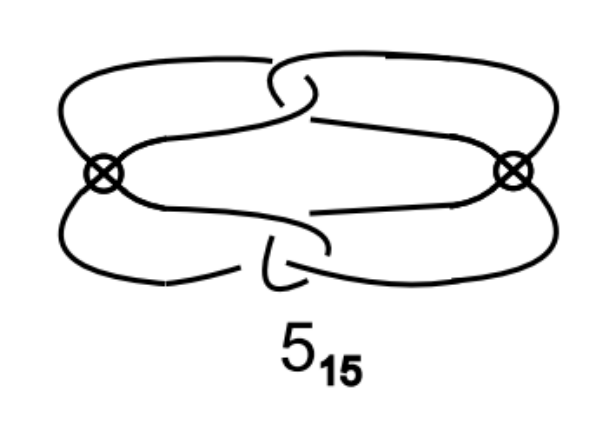} \\ \hline 
\includegraphics[height=1.5cm]{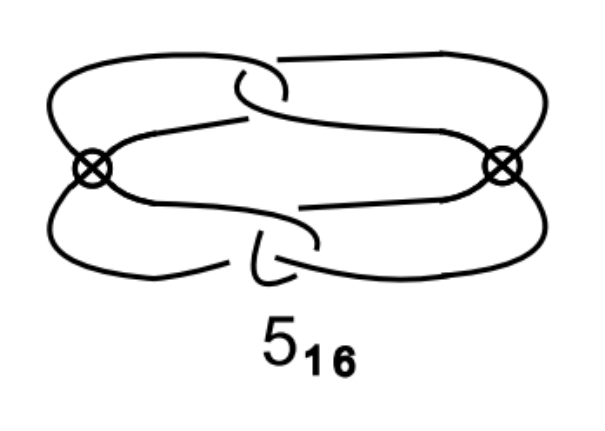} &
\includegraphics[height=1.5cm]{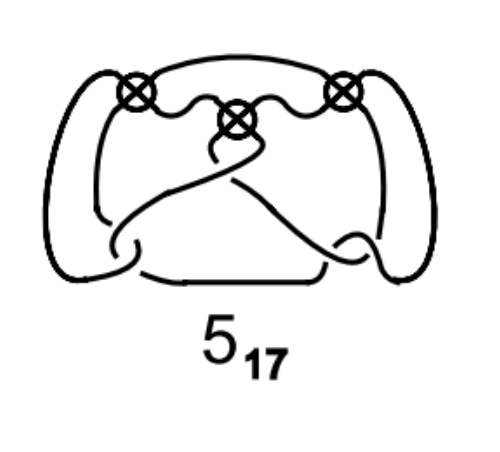} &
\includegraphics[height=1.5cm]{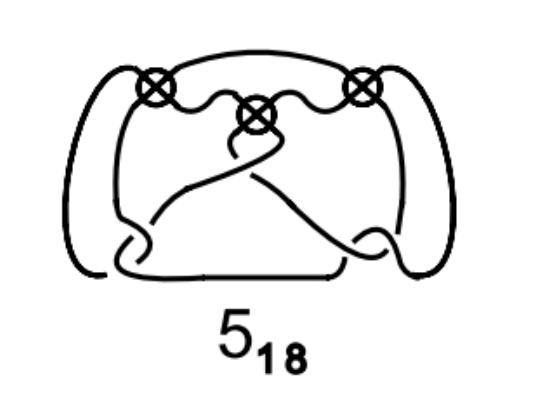} &
\includegraphics[height=1.5cm]{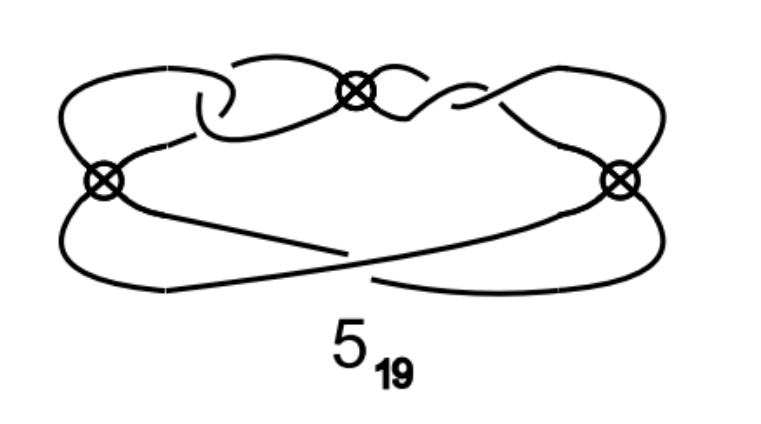} \\ \hline 
\includegraphics[height=1.5cm]{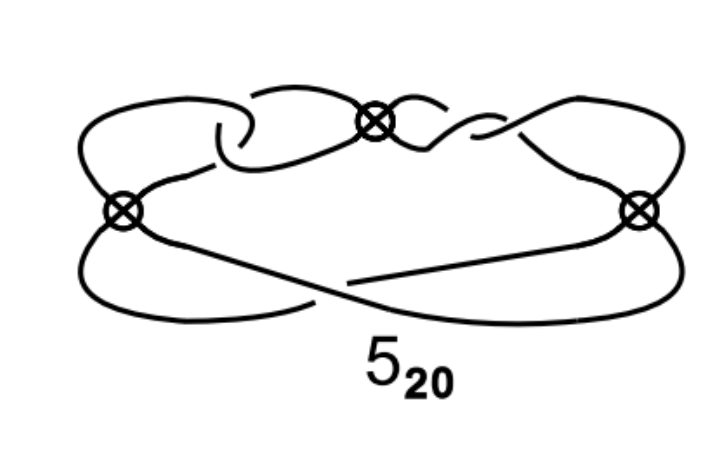} &
\includegraphics[height=1.5cm]{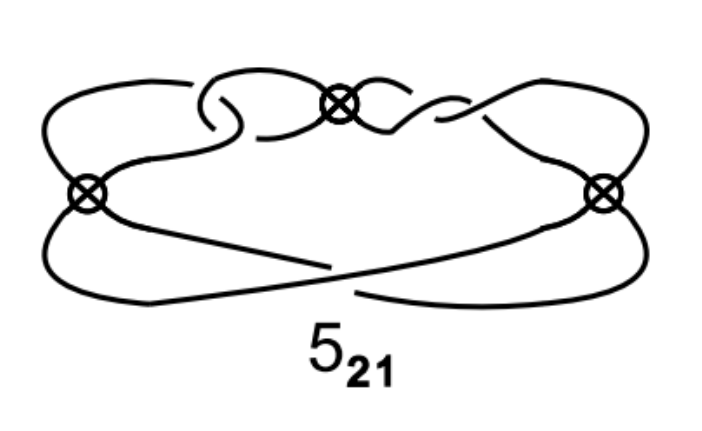} &
\includegraphics[height=1.5cm]{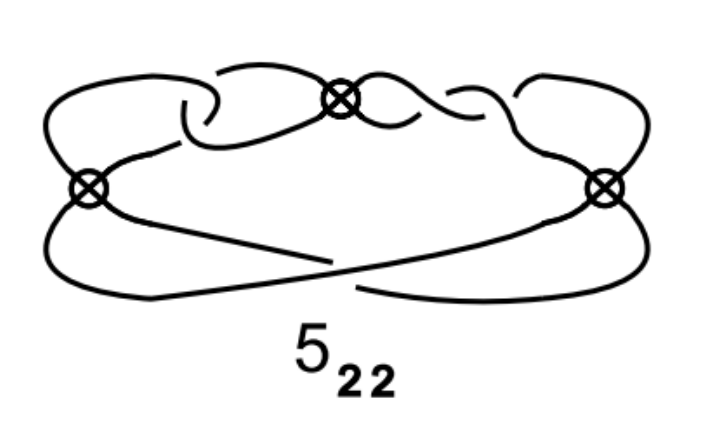} &
\includegraphics[height=1.5cm]{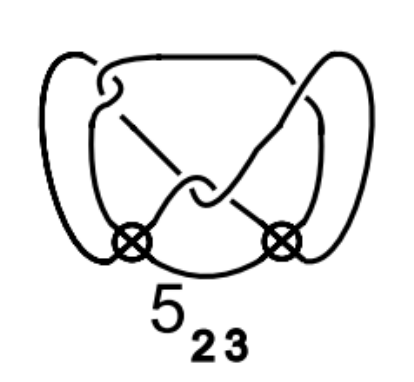} \\ \hline 
\includegraphics[height=1.5cm]{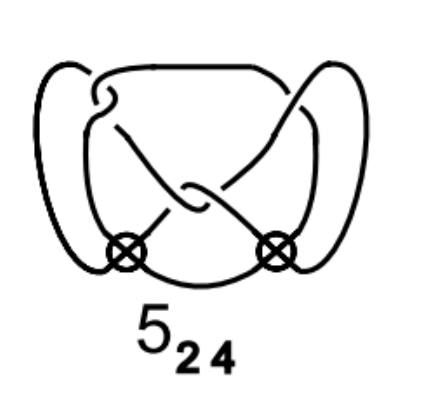} & 
 \includegraphics[height=1.5cm]{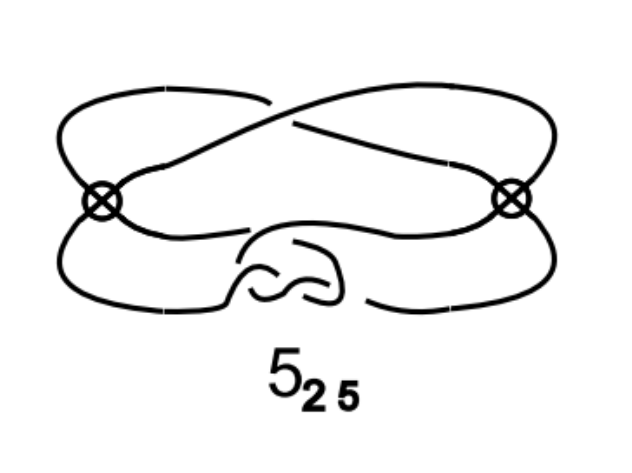} & 
 \includegraphics[height=1.5cm]{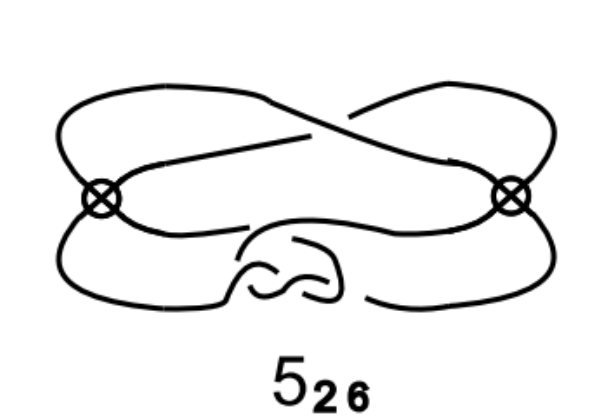} & 
\includegraphics[height=1.5cm]{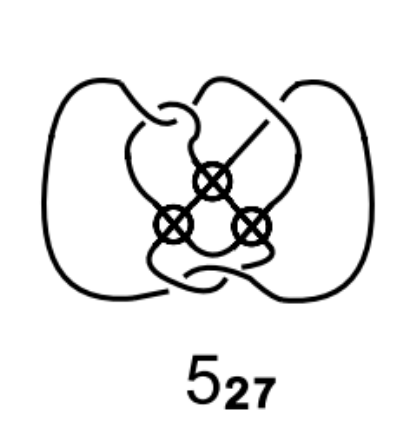} \\ \hline 
\includegraphics[height=1.5cm]{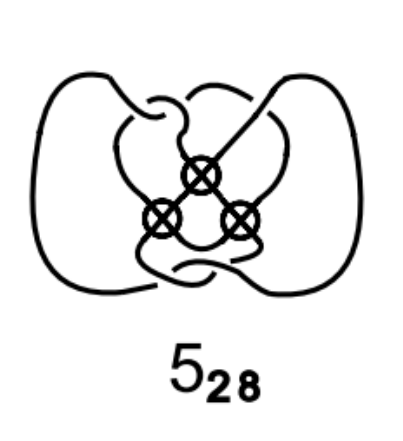} & 
\includegraphics[height=1.5cm]{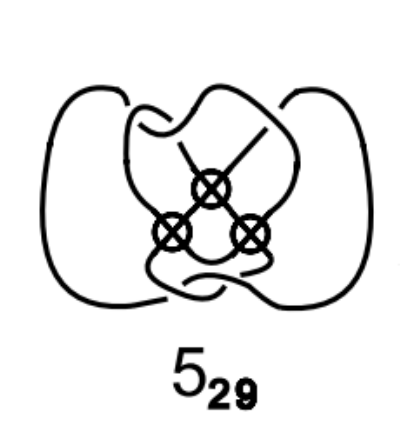}  & 
\includegraphics[height=1.5cm]{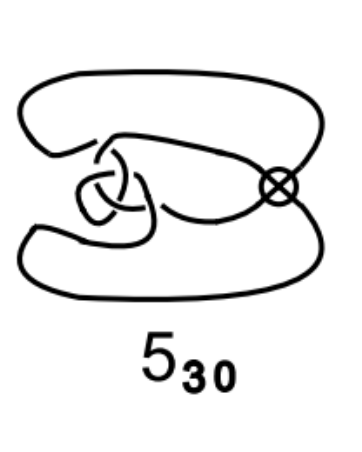} &
\includegraphics[height=1.5cm]{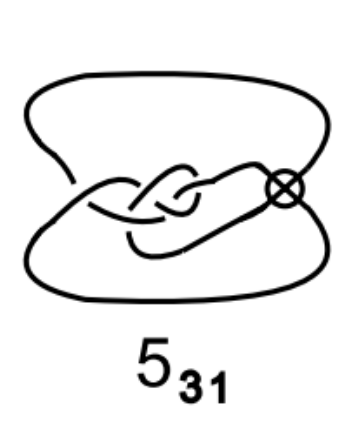} \\ \hline
\end{tabular}
\end{table}

\begin{table}[h!]
\caption{Diagrams of Akimova~-- Matveev knots (II).} \label{table-4}
\begin{tabular}{|c|c|c|c| } \hline
\includegraphics[height=1.5cm]{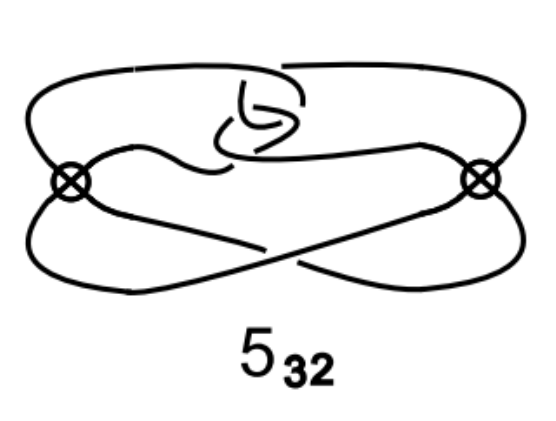} &
\includegraphics[height=1.5cm]{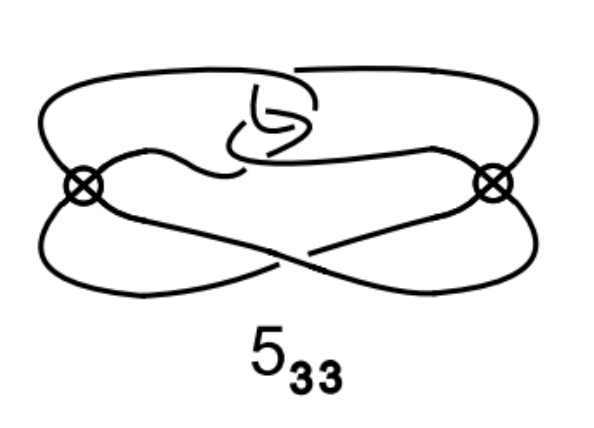} &
\includegraphics[height=1.5cm]{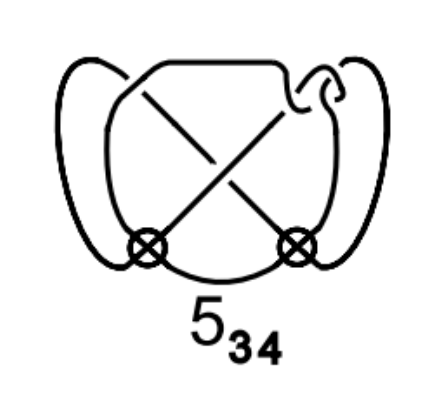} & 
\includegraphics[height=1.5cm]{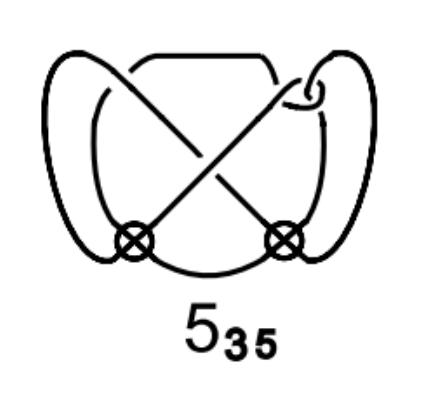} \\ \hline 
\includegraphics[height=1.5cm]{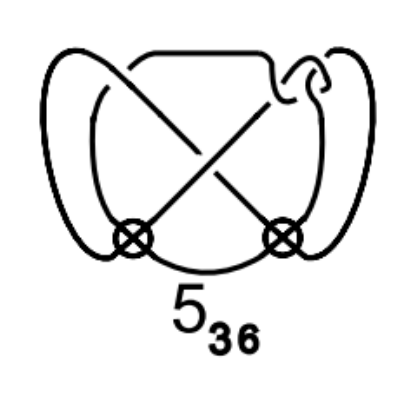} &
\includegraphics[height=1.5cm]{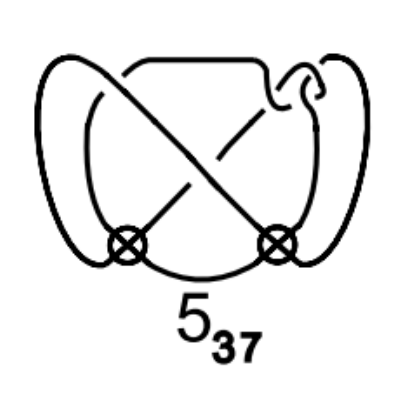} &
\includegraphics[height=1.5cm]{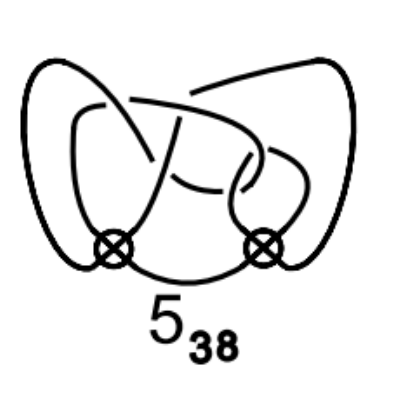} &
\includegraphics[height=1.5cm]{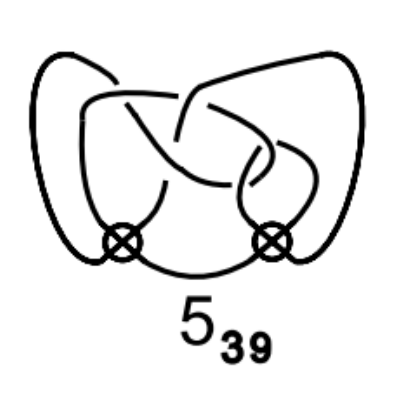} \\  \hline
\includegraphics[height=1.5cm]{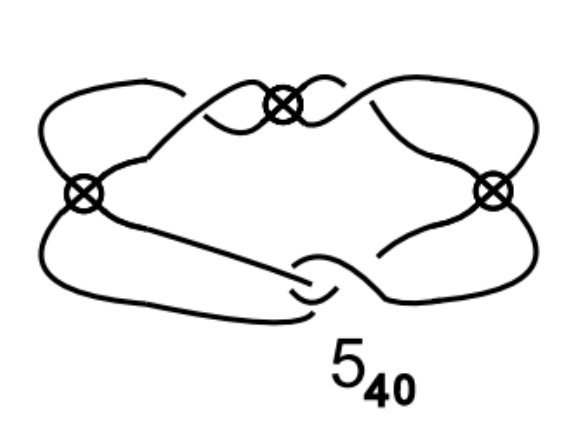} &
\includegraphics[height=1.5cm]{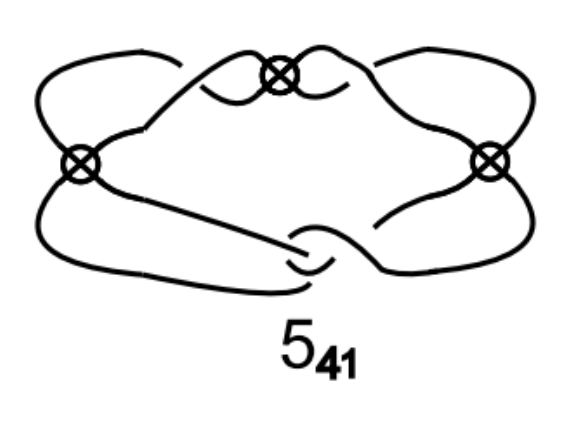} &
\includegraphics[height=1.5cm]{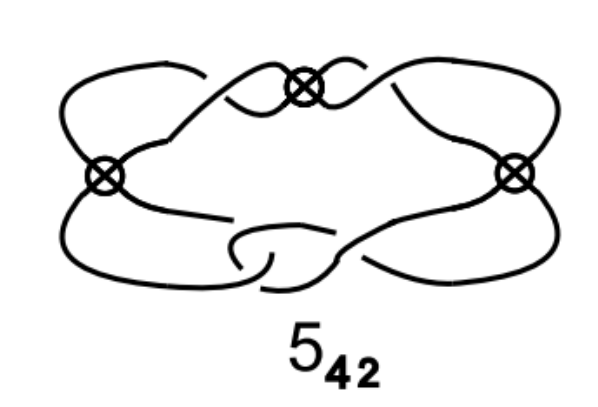} &
\includegraphics[height=1.5cm]{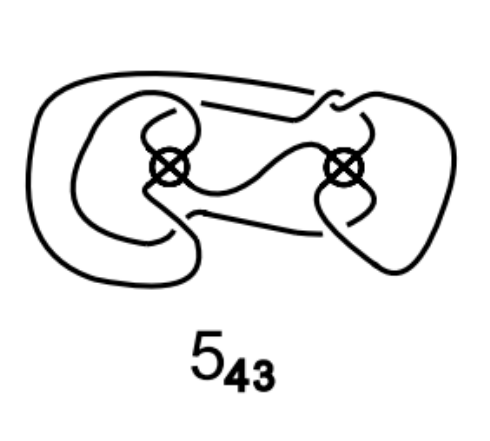} \\ \hline
\includegraphics[height=1.5cm]{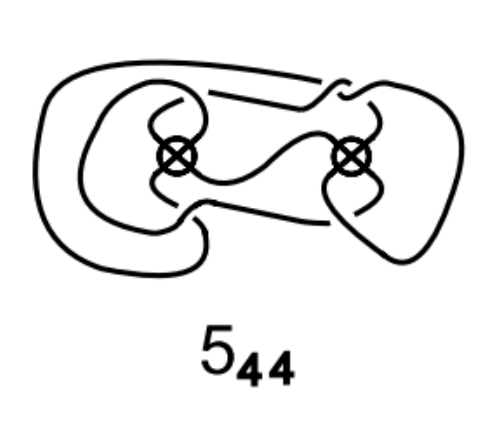} & 
\includegraphics[height=1.5cm]{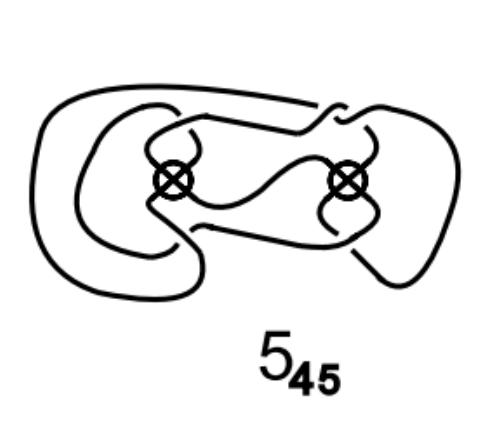} &
\includegraphics[height=1.5cm]{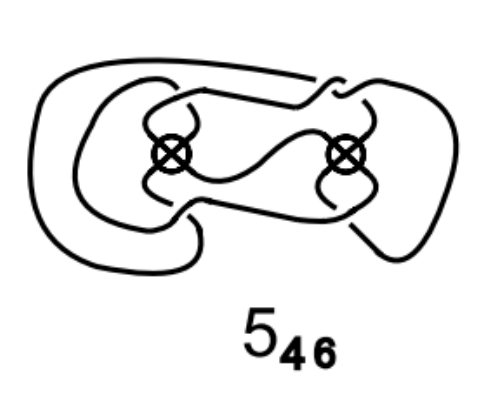} &
\includegraphics[height=1.5cm]{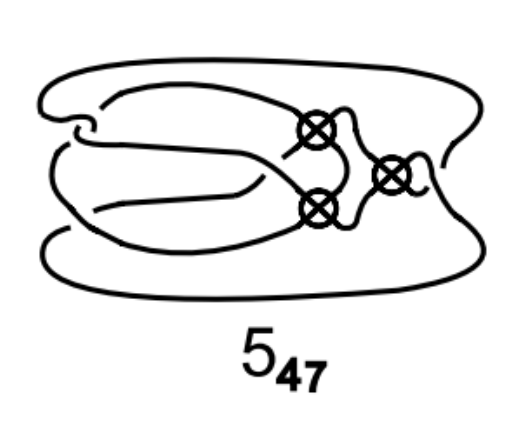} \\ \hline 
\includegraphics[height=1.5cm]{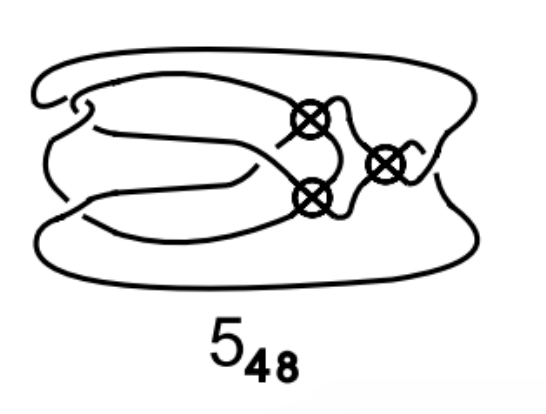} & 
\includegraphics[height=1.5cm]{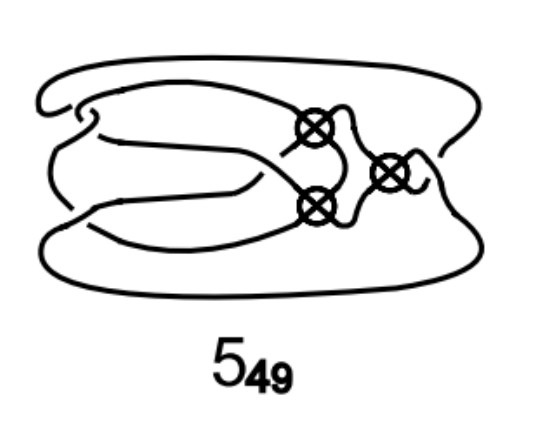} & 
\includegraphics[height=1.5cm]{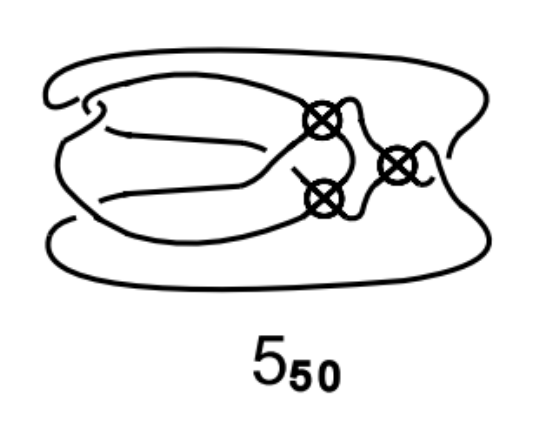} &
\includegraphics[height=1.5cm]{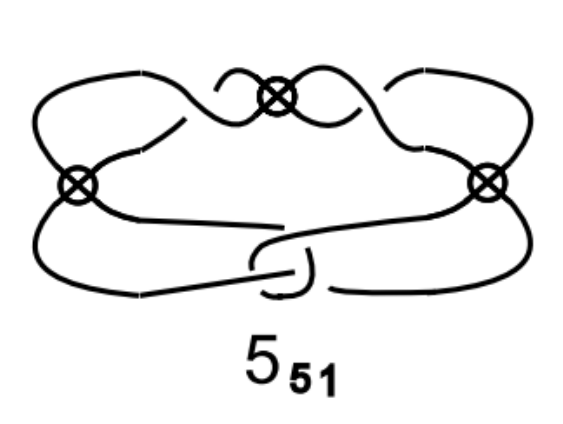} \\ \hline 
\includegraphics[height=1.5cm]{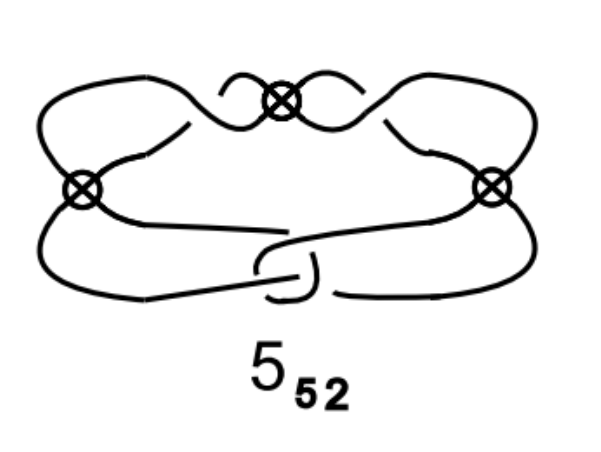} &  
\includegraphics[height=1.5cm]{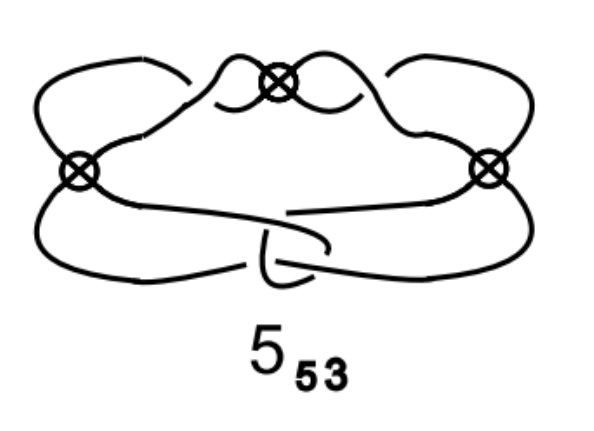} &
\includegraphics[height=1.5cm]{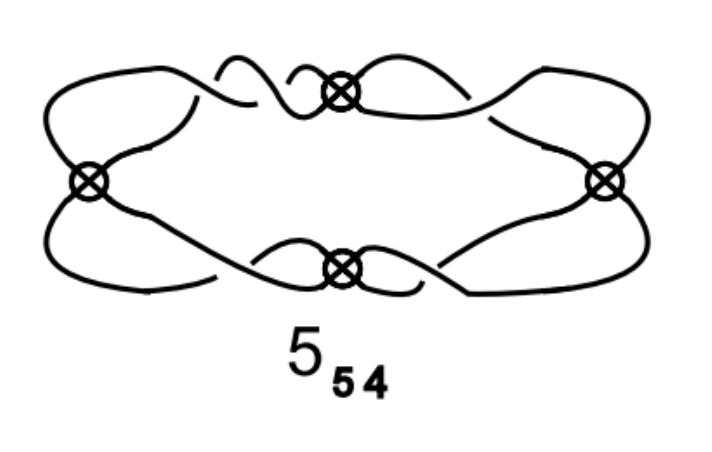} & 
\includegraphics[height=1.5cm]{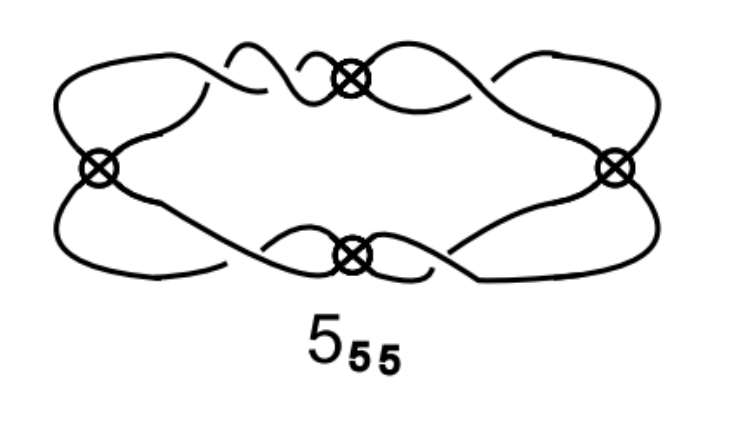} \\ \hline
\includegraphics[height=1.5cm]{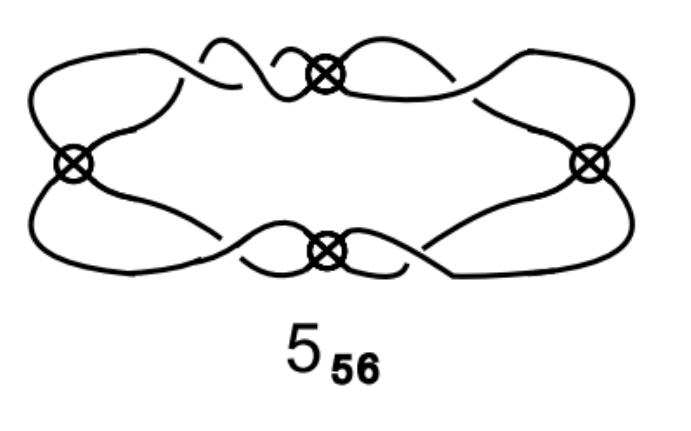} & 
\includegraphics[height=1.5cm]{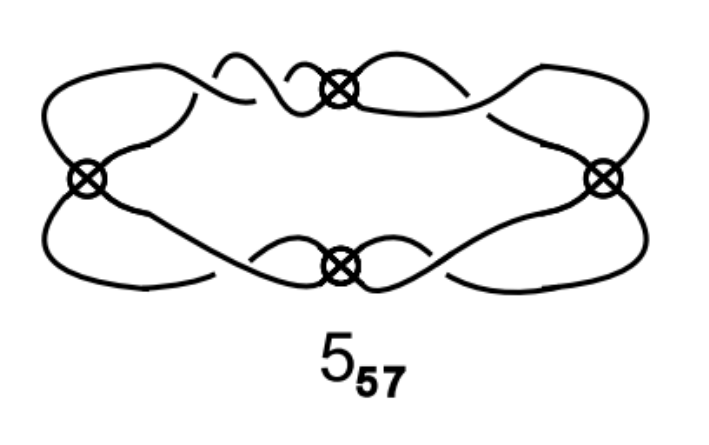} &
\includegraphics[height=1.5cm]{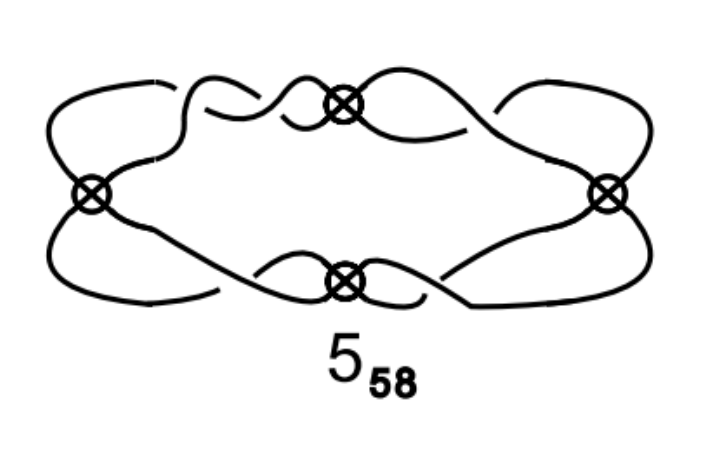} &
\includegraphics[height=1.5cm]{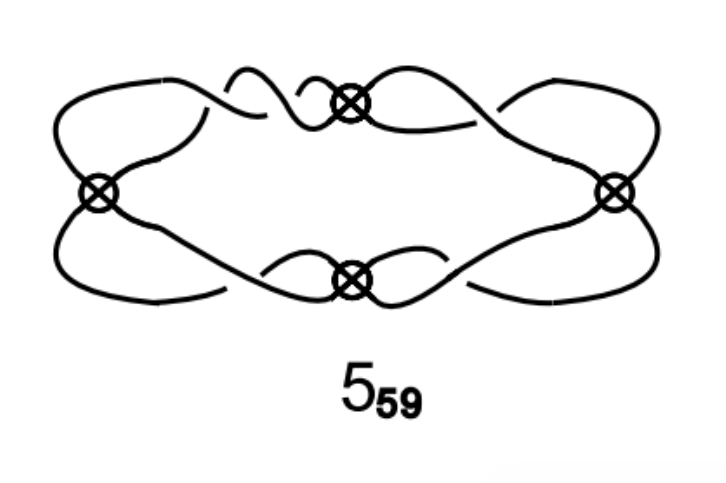} \\ \hline 
\includegraphics[height=1.5cm]{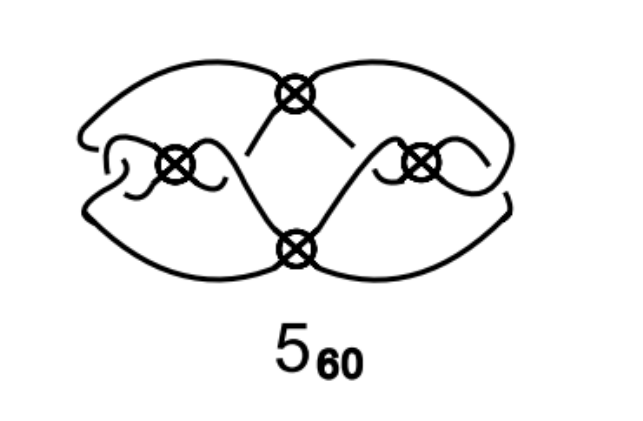} & 
\includegraphics[height=1.5cm]{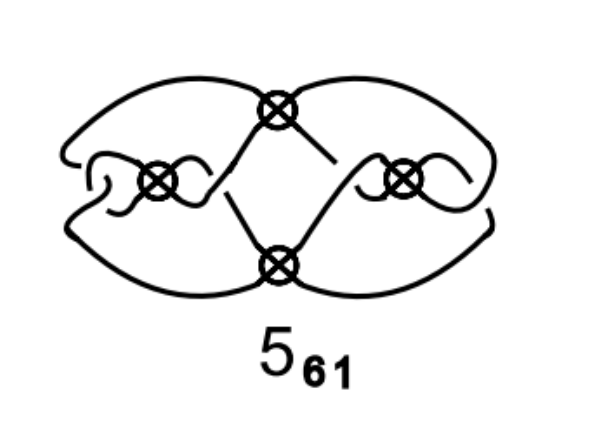} &
\includegraphics[height=1.5cm]{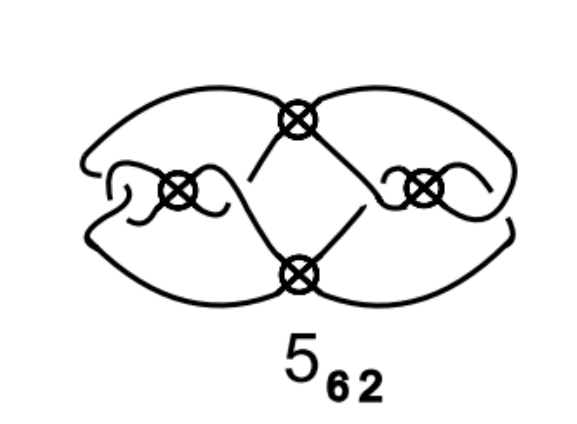} &
\includegraphics[height=1.5cm]{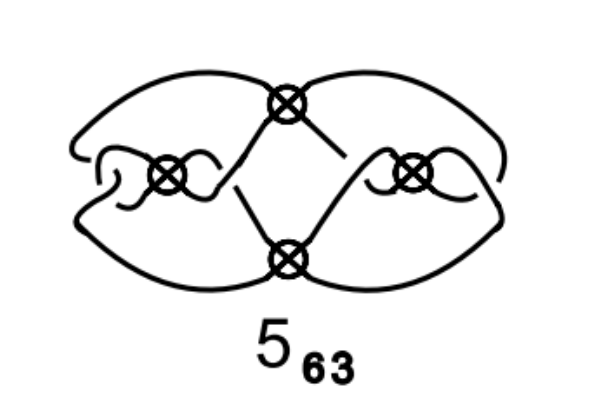} \\ \hline 
\includegraphics[height=1.5cm]{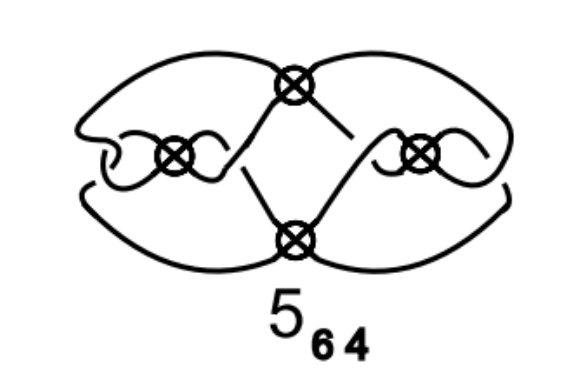} & 
\includegraphics[height=1.5cm]{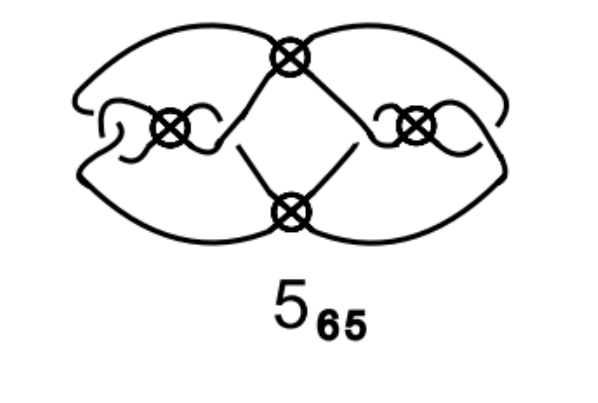} &
\includegraphics[height=1.5cm]{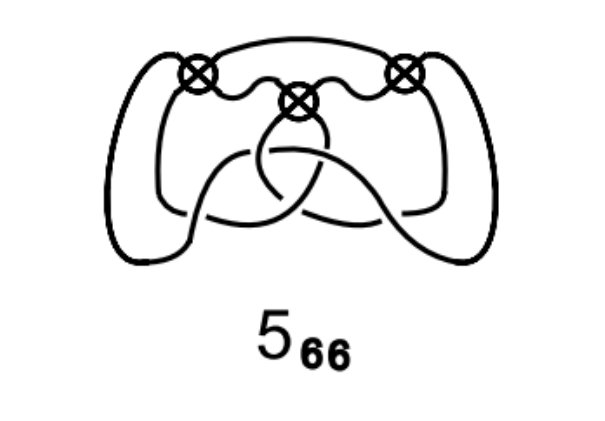} &
\includegraphics[height=1.5cm]{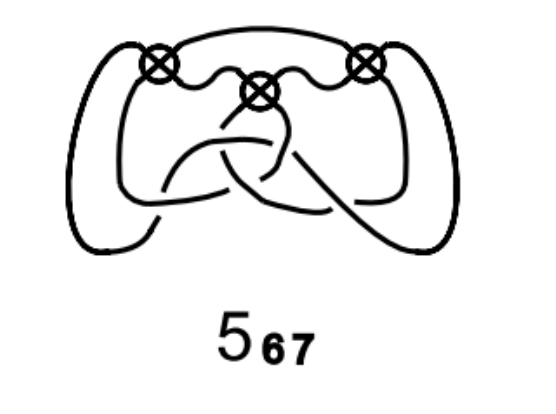} \\ \hline 
\includegraphics[height=1.5cm]{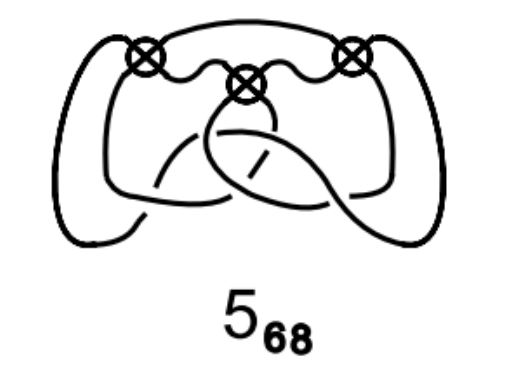} &
\includegraphics[height=1.5cm]{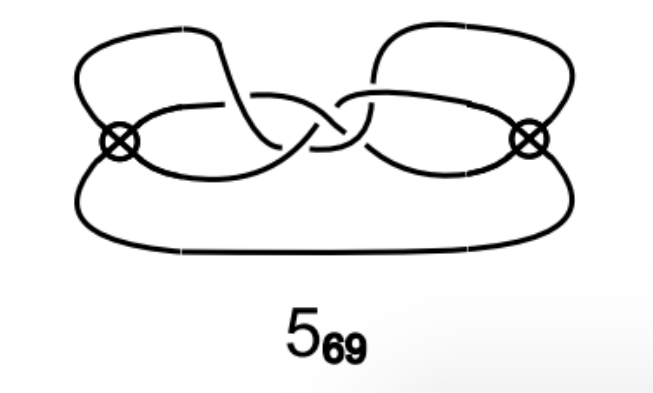} &  & \\ \hline
\end{tabular}
\end{table}

\end{document}